\theoremstyle{plain}                    
\theoremstyle{plain}
\newtheorem{theorem}{Theorem}[section]
\newtheorem{lemma}[theorem]{Lemma}
\newtheorem{bigtheorem}{Theorem}[section]
\theoremstyle{definition}
\newtheorem{definition}[theorem]{Definition}
\newtheorem{alg}[]{Algorithm}
\theoremstyle{remark}
\newtheorem{remark}[theorem]{Remark}
\numberwithin{equation}{section}
\numberwithin{equation}{section}
\def \scale {\text{scale}}
\def\P{\mathbb{P}}
\def\R{\mathbb{R}}
\def\B{\mathcal{B}_{\epsilon}}
\def\tg{G}
\newdimen\HilbertLastX
\newdimen\HilbertLastY
\newcounter{HilbertOrder}
\def\DrawToNext#1#2{
   \advance \HilbertLastX by #2
   \advance \HilbertLastY by #1
   \pgfpathlineto{\pgfqpoint{\HilbertLastX}{\HilbertLastY}}
}
\def\Hilbert[#1,#2,#3,#4,#5,#6,#7,#8] {
  \ifnum\value{HilbertOrder} > 0%
     \addtocounter{HilbertOrder}{-1}
     \Hilbert[#5,#6,#7,#8,#1,#2,#3,#4]
     \DrawToNext {#1} {#2}
     \Hilbert[#1,#2,#3,#4,#5,#6,#7,#8]
     \DrawToNext {#5} {#6}
     \Hilbert[#1,#2,#3,#4,#5,#6,#7,#8]
     \DrawToNext {#3} {#4}
     \Hilbert[#7,#8,#5,#6,#3,#4,#1,#2]
     \addtocounter{HilbertOrder}{1}
  \fi
}
\def\hilbert((#1,#2),#3,#4){%
\advance \HilbertLastX by #1
   \advance \HilbertLastY by #2
   \pgfpathmoveto{\pgfqpoint{\HilbertLastX}{\HilbertLastY}}
   \setcounter{HilbertOrder}{#3}
   \Hilbert[1mm,0mm,-1mm,0mm,0mm,1mm,0mm,-1mm]
   \pgfusepath{stroke};
   \draw[step=1mm, gray, very thin] (0mm,0mm) grid (#4,#4);
   }
\def\hilbertnew((#1,#2),#3,#4){%
 \advance \HilbertLastX by #1
   \advance \HilbertLastY by #2
   \pgfpathmoveto{\pgfqpoint{\HilbertLastX}{\HilbertLastY}}
   \setcounter{HilbertOrder}{#3}
   \Hilbert[1mm,0mm,-1mm,0mm,0mm,1mm,0mm,-1mm]
   \pgfusepath{stroke};
   \draw[step=1mm, gray, very thin] (0mm,0mm) grid (#4,#4);

   }
\begin{document}

\baselineskip=1.3\baselineskip

\title[]{\bf Brownian motion with variable drift\\ can be space-filling}

\author[]{Ton\'{c}i Antunovi\'{c}, Yuval Peres and Brigitta Vermesi
}
\address{Ton\'{c}i Antunovi\'{c}\\
University of California, Berkeley\\
Department of Mathematics\\
Berkeley, CA 94720\\
}
\email{tantun@math.berkeley.edu}

\address{Yuval Peres\\
Microsoft Research\\
Theory Group\\
Redmond, WA 98052}
\email{peres@microsoft.com}

\address{Brigitta Vermesi\\
University of Washington\\
Department of Mathematics\\
Seattle, WA 98195}
\email{bvermesi@math.washington.edu}


\subjclass[2000]{Primary 60J65, 26A16, 26A30, 28A80}
\keywords{Brownian motion, Space-filling curves, H\"{o}lder continuity}

\begin{abstract}
\end{abstract}

\begin{abstract}
For $d \geq 2$ let $B$ be standard $d$-dimensional Brownian motion. For any $\alpha < 1/d$ we construct an $\alpha$-H\"{o}lder continuous function $f \colon [0,1] \to \mathbb{R}^d$ so that the range of $B-f$ covers an open set. This strengthens a result of Graversen (1982) and answers a question of Le~Gall (1988).
\end{abstract}

\maketitle

\section{Introduction}
For $d\ge 2$, let $B_t$ be a $d$-dimensional standard Brownian
motion and $f:[0,1]\to \R^d$ a continuous function. We say $f$
is \emph{polar} for $d$-dimensional Brownian motion if, for all
$x$,
\[\P_x\{ B_t=f(t) \mbox{ for some } t>0\}=0.\]
It is well known that $B_t$ does not hit points (see Corollary 2.24 in \cite{PM})
almost surely, hence constant functions are polar.
The Cameron-Martin theorem (see Theorem 1.38 in \cite{PM} or Theorem 2.2 in Chapter 8 in \cite{RY}) tells us that for all functions $f$ in the
Dirichlet space $\textbf{D}[0,1]$ (integrals of functions in $\textbf{L}^2[0,1]$), the path $B_t-f(t)$ has the same almost sure properties as the Brownian motion path. Hence functions in $\textbf{D}[0,1]$ are polar.
For dimension two Graversen proved the
following.
\begin{bigtheorem}[Graversen \cite{Grav}]
For all $0<\gamma<1/2$, there exists a $\gamma$-H\"{o}lder
continuous function $f:\mathbb{R}^+\to \R^2$
which is non-polar
for planar Brownian motion.
\end{bigtheorem}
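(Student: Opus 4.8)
The plan is to reduce the statement to the following assertion: there exist $\delta>0$ and a deterministic $\gamma$-H\"{o}lder continuous $f\colon[0,\delta]\to\mathbb{R}^2$ such that, with positive probability, the range of $(B-f)|_{[0,\delta]}$ contains an open set. Granting this, extend $f$ to $\mathbb{R}^+$ by $f(t)=f(\delta)$ for $t\ge\delta$ (still globally $\gamma$-H\"{o}lder with the same constant), translate so that the open set in question contains a neighbourhood of the origin, and arrange $f(0)\neq 0$; then $\mathbb{P}_0\{B_t=f(t)\text{ for some }t>0\}>0$, i.e.\ $f$ is non-polar. The reason to work on a \emph{short} interval $[0,\delta]$ will become clear below.

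For the construction, take $-f$ to be a $\gamma$-H\"{o}lder space-filling curve that is surjective onto a cube $Q$ of side $\asymp\delta^\gamma$ and is traversed over $[0,\delta]$. Such a curve exists precisely because $\gamma\le 1/2$: splitting $[0,\delta]$ into $N$ arcs of length $\delta/N$, each has diameter $\lesssim(\delta/N)^\gamma$, and these $N$ balls can cover a side-$L$ cube only if $N^{1-2\gamma}\gtrsim L^2/\delta^{2\gamma}$, which is no constraint when $L\asymp\delta^\gamma$ and $\gamma<1/2$ (and becomes impossible for $\gamma>1/2$). Now $(B-f)(t)=B_t+\bigl(-f(t)\bigr)$ is the space-filling sweep $-f$ perturbed by Brownian motion, and this perturbation is sub-dominant at \emph{every} scale of the sweep: over a sub-interval of length $\tau$ the sweep moves by $\asymp\tau^\gamma$ while $B$ fluctuates by only $\asymp\tau^{1/2}$, and $\tau^{1/2}\ll\tau^\gamma$ for small $\tau$ since $\gamma<1/2$; globally, $\sup_{[0,\delta]}|B_t|\asymp\delta^{1/2}=o(\delta^\gamma)$, which is small compared with the side of $Q$. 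The H\"{o}lder norm of $f$, the boundedness of $f$ (geometric summability of scales in the curve's construction), and the Gaussian tail bound $\mathbb{P}(\sup_{[0,1]}|B|>\lambda)\le Ce^{-c\lambda^2}$ are all routine.

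The heart of the matter---and what I expect to be the real obstacle---is to upgrade ``$B-f$ stays within $o(\delta^\gamma)$ of a surjective sweep of $Q$'' to ``the range of $B-f$ contains an open subset of $Q$.'' This is genuinely delicate. A space-filling curve is never injective (invariance of domain), and one can perturb a space-filling curve in the $C^0$ norm by an arbitrarily small amount and destroy surjectivity onto every open set; relatedly, a compactness argument applied to the partial sweeps only shows that $B-f$ is a $\delta^{1/2}$-net of $Q$, because the approximants carry the non-vanishing term $B_t$. So no soft perturbation argument closes the gap: the probabilistic structure of $B$ must be used. My plan is a degree-theoretic argument: arrange $-f$ so that every $y$ in a sub-cube $Q'\subseteq Q$ is ``encircled at suitably many scales''---there are sub-intervals on which $-f$ traces loops around $y$ of decreasing radii (down to $\asymp\delta^{1/2}$), each loop traversed in a time short enough that the Brownian increment over it is a fixed fraction of the loop's radius---and then on a positive-probability event controlling those Brownian increments, a winding-number computation forces $(B-f)(t)=y$ for some $t$. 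Making this work for a whole open set of $y$'s simultaneously, with the loops assembled into a single $\gamma$-H\"{o}lder curve, is exactly where the slack $\gamma<1/2$ is spent; arranging the union bound (Gaussian tails for $\sup|B|$ over the many sub-intervals, against the number of encircled cells) so as to leave positive probability is the main book-keeping. An alternative organisation is a recursive cell-visiting construction in which at each finite scale one only requires the robust property that every cell at that scale be visited, with a compactness argument passing to surjectivity in the limit---but there the same difficulty reappears as the need to keep the accumulated positional error, of order $\delta^{1/2}$, from swamping the cell size at fine scales.

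Thus the hypothesis $\gamma<1/2$ enters in exactly the two places one expects: the existence of a $\gamma$-H\"{o}lder space-filling curve, and the sub-dominance $\tau^{1/2}\ll\tau^\gamma$ of the Brownian perturbation at every scale (and the attendant room to insert the encircling loops). The identical scheme, with $\gamma<1/2$ replaced by $\gamma<1/d$ and squares by $d$-cubes, should yield the $d$-dimensional analogue.
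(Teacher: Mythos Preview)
Your proposal correctly isolates the real difficulty: a space-filling curve can lose surjectivity under arbitrarily small $C^0$ perturbations, so one cannot argue ``Brownian motion is small, hence the perturbed curve still covers.'' You then offer two routes---a degree/winding argument and a recursive cell-visiting scheme---but carry neither to completion, and in dismissing the second you make the key misstep.

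The paper (which proves the stronger statement that the range of $B-f$ covers an open set almost surely) uses precisely a cell-visiting scheme, but with the twist you did not consider: the space-filling curve is built on \emph{overlapping} squares rather than a tiling. At level $n$, each square of side $\alpha^n$ (with $\alpha>1/2$) is covered by four children of side $\alpha^{n+1}$, each overlapping its parent's quarter by a margin $\beta\alpha^{n+1}$, where $\beta=(2\alpha-1)/4>0$. The curve is parametrised over a Cantor set with contraction ratio $\rho<1/4$, so the time spent in one level-$n$ square is $\rho^n$, and the Brownian oscillation over that time is $O(\sqrt{\rho^n\,n})$. Since $\alpha>1/2>\sqrt{\rho}$, this oscillation is eventually smaller than the overlap margin $\beta\alpha^{n+1}$. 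Hence the four \emph{perturbed} level-$(n{+}1)$ squares still cover the \emph{perturbed} level-$n$ square. This is an inductive covering statement that uses only the \emph{local} Brownian increment at each scale; there is no accumulated positional error. Iterating and taking a compactness limit shows the range of $B-G$ contains a fixed square, almost surely. The H\"{o}lder exponent is $\log\alpha/\log\rho\to 1/2$ as $\alpha\downarrow 1/2$, $\rho\uparrow 1/4$.

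So your concern that ``the accumulated positional error, of order $\delta^{1/2}$, swamps the cell size at fine scales'' is exactly what the overlap is engineered to avoid: the error relevant at scale $n$ is not $\delta^{1/2}$ but $\sqrt{\rho^n\,n}$, and the overlap $\beta\alpha^{n+1}$ beats it. Your degree-theoretic alternative is an interesting idea but is not what the paper does, and as you acknowledge, the simultaneous bookkeeping over an open set of targets is left entirely open in your sketch; the overlap construction replaces all of that by a two-line covering induction.
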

Graversen also conjectured that all $1/2$-H\"{o}lder continuous
functions are polar and gave a partial answer to this
conjecture. In \cite{LeGall}, Le~Gall proved Graversen's
conjecture and gave a similar result for dimensions $d\ge 3$.
\begin{bigtheorem}[Le~Gall \cite{LeGall}]\label{thm:LG}
Let $f:\mathbb{R}^+\to \R^d$ be a continuous function. Then $f$ is
polar for $d$-dimensional Brownian motion if it satisfies one of
the following conditions:
\begin{itemize}
\item[(i)] $d=2$ and for all $K>0$ there exists a $\delta>0$
such that for all $s, t$ with $0 \leq s < t \leq K$ and $0<t-s<\delta$,
\begin{equation}\label{2-condition}
|f(t)-f(s)| \le \left[ 2(t-s)\log\log\left(\frac{1}{t-s}\right)
\right]^{1/2},
\end{equation}
\item[(ii)] $d\ge 3$ and for each $K>0$,
\begin{equation}\label{d condition}
\lim_{\delta\to 0}\, \sup_{\substack{0\le s< t\le
K\\t-s<\delta}}\, \frac{|f(t)-f(s)|}{(t-s)^{1/d}}=0.
\end{equation}
\end{itemize}
\end{bigtheorem}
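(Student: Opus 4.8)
The plan is to fix a compactum $[\eta,K]$ with $0<\eta<K$ and to show that the perturbed process $Z_t:=B_t-f(t)$ almost surely does not vanish on $[\eta,K]$; since $\{B_t=f(t)\text{ for some }t\in[\eta,K]\}$ increases to $\{B_t=f(t)\text{ for some }t\in(0,K]\}$ as $\eta\downarrow0$, and the estimates below are uniform in the starting point $x$, this yields the theorem. Write $\omega_f(h)=\sup\{|f(t)-f(s)|:\eta\le s\le t\le K,\ t-s\le h\}$. The common engine is an estimate for the probability that Brownian motion enters a small ball $B(y,\sigma)$ during a short window $[s,s+\ell]$ with $s\ge\eta$: conditioning on $B_s$, bounding its density by $(2\pi\eta)^{-d/2}$, and combining Brownian scaling with the classical small-ball hitting asymptotics, one gets a constant $C_\eta$ with
\[
\P\{B_t\in B(y,\sigma)\text{ for some }t\in[s,s+\ell]\}\le C_\eta\,\sigma^{d-2}\max(\ell,\sigma^2)\qquad(d\ge3),
\]
the two regimes meeting at the natural scale $\sigma\sim\sqrt\ell$ (and the analogous bound $C_\eta\,\sigma^2$ in the regime $\sigma\ge\sqrt\ell$ when $d=2$). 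The reduction to this is that if $Z$ enters $B(0,\epsilon)$ during $[s,s+\ell]$ then $B$ enters $B(f(s),\sigma)$ with $\sigma=\epsilon+\omega_f(\ell)$, since $f$ moves at most $\omega_f(\ell)$ over the window.

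For $d\ge3$ I would run a first–moment (union bound) argument. Partition $[\eta,K]$ into $\sim(K-\eta)/\ell_k$ intervals of length $\ell_k=2^{-k}$, set $\epsilon_k=\ell_k^{2/d}$ and $\sigma_k=\epsilon_k+\omega_f(\ell_k)$, and let $N_k$ count the subintervals during which $Z$ enters $B(0,\epsilon_k)$. If $Z$ vanishes somewhere in $[\eta,K]$ then $N_k\ge1$, so $\P\{Z\text{ hits }0\}\le\inf_k\E[N_k]$. Summing the displayed estimate over the subintervals, the factor $\ell_k$ cancels and
\[
\E[N_k]\le C_\eta(K-\eta)\bigl(\sigma_k^{\,d-2}+\sigma_k^{\,d}/\ell_k\bigr).
\]
Condition \eqref{d condition} gives $\omega_f(\ell_k)=o(\ell_k^{1/d})$, hence $\sigma_k=o(\ell_k^{1/d})$ and $\sigma_k^{\,d}=o(\ell_k)$; both terms therefore tend to $0$, so $\E[N_k]\to0$ and $f$ is polar. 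The threshold is sharp: at $\omega_f(h)\sim h^{1/d}$ the bound is $O(1)$, exactly where the companion construction produces non-polar $f$.

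The case $d=2$ is genuinely harder, because here the same union bound diverges: with $\omega_f(h)^2\sim 2h\log\log(1/h)$ one finds $\E[N_k]\asymp\log\log(1/\ell_k)\to\infty$, so no single-scale count works and the critical $\log\log$ must be captured through correlations across scales. I would instead use the energy (capacity) method for the space–time motion $\widehat B_t=(t,B_t)$ in $\R^{1+d}$, whose Green kernel is the heat kernel $p_{t-s}(y-x)$; polarity of $f$ is exactly polarity of the graph $\Gamma=\{(t,f(t))\}$ for $\widehat B$, and $\Gamma|_{[\eta,K]}$ is polar iff it carries no measure of finite energy. For any probability measure $\mu$ on (the time parameter of) $\Gamma|_{[\eta,K]}$,
\[
\iint p_{t-s}(f(t)-f(s))\,d\mu(s)\,d\mu(t)\ \ge\ \iint \frac{c}{|t-s|}\,\exp\!\Bigl(-\frac{\omega_f(|t-s|)^2}{2|t-s|}\Bigr)\,d\mu(s)\,d\mu(t),
\]
and condition \eqref{2-condition}, namely $\omega_f(h)^2\le 2h\log\log(1/h)$, makes the exponential at least $(\log(1/|t-s|))^{-1}$, so the kernel dominates $c\,|t-s|^{-1}(\log(1/|t-s|))^{-1}$. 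Since $\int_0 (h\log(1/h))^{-1}\,dh=\infty$, this decreasing kernel forces every $\mu$ to have infinite energy; hence $\mathrm{Cap}(\Gamma|_{[\eta,K]})=0$ and $f$ is polar.

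The main obstacle is the $d=2$ analysis, which is borderline: the constant $2$ in \eqref{2-condition} is precisely the value for which $h^{-1}(\log(1/h))^{-1}$ just fails to be integrable at $0$, so any slackness in the kernel lower bound destroys the conclusion. Making this rigorous requires (a) the potential theory of the time–directed, non-symmetric process $\widehat B$ and the equivalence of polarity with vanishing capacity, and (b) a one–dimensional lemma asserting that a decreasing kernel $k$ with $\int_0 k=\infty$ assigns infinite energy to every probability measure on an interval. For $d\ge3$ the corresponding technical point is merely a careful justification of the small-ball hitting estimate and its crossover at $\sigma\sim\sqrt\ell$, which is routine by scaling, after which the first-moment bound closes the argument.
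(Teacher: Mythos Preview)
Your proposal is essentially correct, but it differs substantially from what the paper actually does, so a comparison is in order.

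First, note that the paper does \emph{not} attempt to reprove part (i); the $d=2$ case is simply attributed to Le~Gall. So your energy/capacity argument for $d=2$ has no counterpart in the paper. The outline you give is the natural one and is close in spirit to Le~Gall's original; the two technical ingredients you flag --- the polarity/capacity equivalence for the space--time process and the fact that a decreasing one-dimensional kernel with divergent integral gives infinite energy to every probability measure --- are indeed the crux, and both are classical. Your identification of the constant $2$ in \eqref{2-condition} as exactly the borderline where $(h\log(1/h))^{-1}$ ceases to be integrable is the correct reading of why this case is delicate.

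For part (ii), the paper's proof (given at the very end) is markedly different from, and shorter than, your first-moment covering argument. Rather than estimate hitting probabilities of small balls and sum over a partition, the paper observes that condition~\eqref{d condition} forces the image of $f$ restricted to any compact interval to have zero $d$-dimensional Hausdorff measure; since Brownian motion is $\alpha$-H\"older for every $\alpha<1/2$ and $1/2>1/d$ when $d\ge 3$, the perturbed path $B-f$ almost surely satisfies the same condition~\eqref{d condition}, hence its range also has volume zero. Fubini then turns zero expected volume into polarity. What this buys is that the probabilistic input is reduced to the H\"older continuity of $B$ and all the work becomes a deterministic Hausdorff-measure calculation. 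Your approach, by contrast, buys a quantitative hitting estimate and makes the threshold $h^{1/d}$ visible directly in the expected count $\E[N_k]$, which connects more transparently to the non-polarity constructions in the rest of the paper. Both routes are valid; the paper's is slicker, yours is more robust and closer to the probabilistic machinery one would need anyway for the companion results.
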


At the end of \cite{LeGall} Le~Gall asked whether for each $\gamma < 1/d$ there exist $\gamma$-H\"{o}lder continuous functions which are non-polar for $d$-dimensional Brownian motion.
By a simple application of Fubini's theorem, a function $f$ is non-polar if and only if the expected volume of the range of $B-f$ is positive.
In the following theorem we give a positive answer to
Le~Gall's question and also strengthen Graversen's Theorem.
\begin{theorem}\label{thm:main}
For $d\ge 2$, let $B_t$ be a $d$-dimensional Brownian
motion. Then for any $\gamma<1/d$, there exists a
$\gamma$-H\"{o}lder continuous function $f:[0,1]\to\R^d$ for
which the range of $B_t-f(t)$, for $t \in [0,1]$, covers an open set almost surely.
\end{theorem}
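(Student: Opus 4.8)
The plan is to build $f$ as a deterministic "approximately space-filling" curve at a hierarchy of scales, and show that the Brownian perturbation $B-f$ still covers a small ball with positive probability; by the Fubini argument mentioned in the introduction, positive probability of covering a fixed ball suffices (then a zero-one type argument or just openness of the covered set finishes). Fix $\gamma < 1/d$ and choose an integer $n$ large with $\gamma < 1/d - c/n$ or similar. The curve $f$ will be assembled from building blocks at dyadic (or $n$-adic) scales: partition $[0,1]$ into $N=n^d$ subintervals of length $N^{-k}$ at level $k$, and on the $j$-th subinterval let $f$ traverse, at speed dictated by Hölder exponent $\gamma$, a path that visits the neighborhood of the $j$-th point of a fixed fine grid in $\R^d$ at the appropriate scale, returning near a designated basepoint at the end of each block so the blocks concatenate continuously. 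Because a block of time-length $\Delta$ is asked to move distance of order $\Delta^{1/d}$ (up to the grid spacing at that level), and $\Delta^{1/d} \gg \Delta^{\gamma}$ is false — wait, we need $\Delta^{\gamma} \geq $ the required displacement, so we need the grid at level $k$ to have spacing $\gtrsim (N^{-k})^{\gamma}$ while there are $N^k$ points available to place in a ball of fixed radius; this forces $N^k \cdot (N^{-k})^{d\gamma} \gtrsim 1$, i.e. $\gamma < 1/d$, which is exactly the hypothesis. So the Hölder constraint and the space-filling constraint are compatible precisely in the stated range.

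The core probabilistic step is a recursive "coverage with high probability" estimate. Let $p_k$ be (a lower bound for) the probability that, given $B-f$ starts within a ball of radius $r_k$ of a target point, the path $B-f$ restricted to one level-$k$ block gets within $r_{k+1}$ of each of the sub-targets of that block. One shows $p_k \geq 1 - \epsilon_k$ where the block, being a time interval of length $N^{-k}$, has Brownian fluctuation of order $N^{-k/2}$, which dominates the deterministic grid spacing $\sim N^{-k\gamma}$ when $\gamma > 1/2$... but here $\gamma$ can be less than $1/2$, so in fact the drift $f$ dominates and the Brownian part is a small perturbation — this is the favorable regime for $d \geq 3$ where $1/d < 1/2$. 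The displacement needed within a block is achieved by $f$ itself, and $B$ only needs to not spoil it; a block is subdivided into $N$ sub-blocks, in each of which $f$ makes a nearly-straight excursion to the relevant sub-target and back, and we ask only that the Brownian increment over each sub-block is smaller than a constant fraction of the margin. Multiplying over the $N^k$ sub-blocks at level $k$ and summing a convergent series $\sum_k N^k \epsilon_k$ (achievable by taking $r_k$, $\epsilon_k$ to decay geometrically fast, using Gaussian tail bounds $\epsilon_k \leq \exp(-c \, (\text{margin})^2 / N^{-k})$ and choosing margins that shrink only polynomially) gives that with probability bounded away from $0$ the path $B-f$ meets every node of the limiting grid, hence is dense in a fixed ball; since the range of a continuous function on a compact interval is closed, density in the ball gives that the range contains the ball.

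**Main obstacle.** The delicate point is the bookkeeping that makes the recursion close: one must choose the sequence of scales for the grid, the "return accuracy" $r_k$ at which consecutive blocks are glued, and the per-sub-block margins, so that simultaneously (a) the concatenated $f$ is genuinely $\gamma$-Hölder with a uniform constant across all scales — this requires controlling not just within-block but across-block increments, i.e. $|f(t)-f(s)|$ for $s,t$ in different blocks, via a telescoping/geometric-series bound; (b) the target grid at level $k+1$ refines the one at level $k$ and its spacing is compatible with the $r_k$-accuracy available from the parent block; and (c) the failure probabilities are summable with room to spare so that the intersection over all levels has positive probability. I expect (a) — getting a clean uniform Hölder bound for the infinitely-iterated concatenation while still leaving enough "speed budget" to reach all sub-targets — to be the main technical hurdle, together with setting up the right quantitative induction hypothesis that is strong enough to propagate. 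The case $d=2$ (where $1/d = 1/2$ is the critical Hölder exponent for Brownian motion itself) may need a separate or refined argument compared to $d \geq 3$.
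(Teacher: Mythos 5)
Your overall strategy --- a hierarchical family of nested cells visited by a Hölder drift, with the Brownian increment over each time block required not to spoil the cell structure --- is the same mechanism the paper uses, but as written the proposal has two genuine gaps, and the first one sits exactly at the heart of the theorem. In your bookkeeping the level-$k$ blocks have length $N^{-k}$ and the cells that must cover a fixed ball have size of order the grid spacing $N^{-k/d}$, so the margin available to absorb the Brownian error is at most a constant times $N^{-k/d}$, while the Brownian oscillation over a block is of order $\sqrt{N^{-k}\log (N^{k})}\asymp \sqrt{k}\,N^{-k/2}$. For $d\ge 3$ this is harmless, but for $d=2$ the margin and the Brownian standard deviation are of the \emph{same} order and the $\sqrt{\log}$ factor wins: the per-block failure probability is bounded below by a constant, $\sum_k N^k\epsilon_k$ diverges, and no union bound (nor any cleverer argument) can give covering of an open set from constant per-cell failure --- a nested-cell scheme in which each cell independently fails with fixed positive probability produces a limit set with empty interior, as in fractal percolation. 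You flag $d=2$ as ``possibly needing a refined argument,'' but $d=2$ is the main content of the statement. The missing idea is to build strict geometric slack into \emph{every} scale: make the $2^d$ children of a cell have side $\alpha$ times the parent's with $\alpha>1/2$, so they overlap and still cover the parent even after each child is shifted by $\beta\alpha^{k+1}$ with $\beta=(2\alpha-1)/4>0$, and simultaneously compress the time parametrization onto a Cantor set with ratio $\rho<1/4$ (equivalently, insert connection intervals), so that the Brownian oscillation over a level-$k$ time interval is $O(\sqrt{\rho^k k})$ while the overlap margin is $\beta\alpha^{k+1}$; since $\alpha>\sqrt{\rho}$ the margin dominates at all large scales, at the price of the Hölder exponent $\log\alpha/\log\rho<1/2$, which is exactly the allowed range. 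This is the paper's generalized Hilbert curve, and with it the probabilistic estimates disappear entirely: the covering statement (Theorem \ref{thm: main general version}) is deterministic and holds for \emph{any} perturbation $h$ with $|h(t+s)-h(t)|\le C\sqrt{s\log(1/s)}$.

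The second gap is the conclusion itself: the theorem asserts that the range of $B-f$ covers an open set \emph{almost surely}, whereas your argument, conditioning simultaneously on all per-block events, yields only probability bounded away from zero, and neither of your proposed upgrades closes this --- the Fubini remark in the introduction only converts positive expected volume into non-polarity, and the event ``the range contains some open set'' is not covered by any zero-one law you identify. The repair is the one the paper uses: invoke only the almost sure modulus of continuity of $B$ (or, in your language, Borel--Cantelli over levels using $\sum_k N^k\epsilon_k<\infty$), so that almost surely there is a random level $n_0$ beyond which every scale succeeds, and then run the nested-cells argument starting from level $n_0$ to cover the (random) level-$n_0$ cell. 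With those two corrections your outline becomes essentially the paper's proof; without them, neither the $d=2$ case nor the almost-sure covering is established.
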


Note that, in Theorem \ref{thm:LG}, the condition for polarity
in higher dimensions is stronger than for 2 dimensions. We prove
the following theorem which shows that a stronger condition is
indeed necessary.
\begin{theorem}\label{thm:standard Hilbert}
For $d\ge 3$, there exists a $1/d$-H\"{o}lder continuous
function $f:[0,1]\to \R^d$ that is non-polar for $d$-dimensional
Brownian motion.
\end{theorem}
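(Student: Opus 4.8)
The plan is to take $f$ to be the $d$-dimensional Hilbert curve $f\colon[0,1]\to[0,1]^d$. It is $1/d$-H\"older continuous for a soft reason: if $|t-s|\le 2^{-dk}$ then $s,t$ lie in at most two adjacent dyadic subintervals of generation $k$, hence $f(s),f(t)$ lie in at most two adjacent dyadic subcubes of side $2^{-k}$, so $|f(t)-f(s)|\le 2\sqrt d\,2^{-k}\le 2\sqrt d\,|t-s|^{1/d}$. By the Fubini observation recorded above, it suffices to prove that $\E\bigl[\operatorname{Leb}(\{B_t-f(t):t\in[0,1]\})\bigr]>0$; we shall in fact show this range has positive Lebesgue measure almost surely. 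For a dyadic subcube $C$ of side $2^{-k}$ write $I_C$ for the generation-$k$ dyadic subinterval with $f(I_C)=C$, and put $W_m:=C_*\sqrt m\,2^{-dm/2}$.

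The first step is a Borel--Cantelli estimate. By the reflection principle, $\P(\operatorname{osc}_{[0,1]}B>C_*\sqrt m)$ decays like $e^{-c C_*^2 m}$, so a union bound over the $2^{dm}$ cubes of generation $m$ gives $\P(\exists C\colon \operatorname{osc}_{I_C}B>W_m)\le 2^{dm}e^{-cC_*^2 m}$, which is summable once $C_*$ is chosen large. Hence the event $\Omega_1$, that every generation-$m$ cube $C$ satisfies $\operatorname{osc}_{I_C}B\le W_m$ for all $m$ past a (random, a.s.\ finite) threshold, has probability one.

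The heart of the matter is a direct construction of hitting times. Fix a generation-$k$ cube $C$, with $I_C=[a,a+2^{-dk}]$, and fix $x\in\R^d$. Set $C^{(k)}:=C$; given $C^{(m)}$ with left endpoint $a_m$, let $y_m:=B_{a_m}-x$, and if $y_m$ lies in $C^{(m)}$ at distance more than $W_m$ from every face of $C^{(m)}$ and from every hyperplane splitting $C^{(m)}$ into its $2^d$ children, let $C^{(m+1)}$ be the child containing $y_m$; otherwise stop. (All of this is a deterministic function of the path and of $x$.) If the recursion never stops, the intervals $I_{C^{(m)}}$ are nested with lengths $2^{-dm}\to 0$, so they shrink to a single point $t^\ast\in I_C$; and since $y_m\in C^{(m)}\ni f(t^\ast)$ with $\operatorname{diam}C^{(m)}\to 0$ while $B_{a_m}\to B_{t^\ast}$, we get $f(t^\ast)=\lim_m y_m=B_{t^\ast}-x$, i.e.\ $x=B_{t^\ast}-f(t^\ast)\in (B-f)(I_C)$. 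Thus every $x$ whose recursion survives forever lies in the range of $B-f$.

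It remains to bound the measure of the ``bad'' $x\in B_a-C$ (a region of measure $2^{-dk}$) whose recursion stops, say at generation $m\ge k$. For the cube $D:=C^{(m)}(x)$ this forces $x\in B_{a_D}-(\text{the }W_m\text{-collar of the faces and splitting hyperplanes of }D)$, a set of measure $O(2^{-(d-1)m}W_m)$; summing over the $2^{d(m-k)}$ generation-$m$ subcubes of $C$ bounds the measure that dies at generation $m$ by $O\bigl(2^{-dk}\cdot 2^m W_m\bigr)$. On $\Omega_1$ (for $k$ beyond the random threshold) $2^m W_m=C_*\sqrt m\,2^{-m(d-2)/2}$, so the total bad set has measure at most $C'\varepsilon_k\,2^{-dk}$ with $\varepsilon_k:=\sum_{m\ge k}\sqrt m\,2^{-m(d-2)/2}\to 0$. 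This is exactly where $d\ge 3$ is essential --- the series diverges when $d=2$, consistent with the polarity of the planar Hilbert curve given by Theorem~\ref{thm:LG}. Consequently $\operatorname{Leb}\bigl((B-f)(I_C)\bigr)\ge(1-C'\varepsilon_k)2^{-dk}\ge\tfrac12 2^{-dk}>0$ for $k$ large, whence $\operatorname{Leb}(\mathrm{range}(B-f))>0$ a.s.\ and $f$ is non-polar. The one point that requires care is this last measure estimate: because the nested cubes $C^{(m)}(x)$ depend on $x$, one must first partition the still-alive $x$'s according to the value of $C^{(m)}(x)$ and only afterwards discard that constraint; it is the slow variation of $B$ along $I_C$ (an $\Omega_1$-consequence) that prevents the estimate from degrading, and it is the summability of $\sum_m 2^m W_m$ --- valid precisely for $d\ge 3$ --- that makes the bad set negligible.
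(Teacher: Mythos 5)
Your proposal is correct, and it takes a genuinely different route from the paper's. The paper first establishes a reverse-H\"{o}lder lemma: for any $\alpha>1/d$ the standard Hilbert curve satisfies $|H(s)-H(t)|\ge C|s-t|^{\alpha}$ for all $s,t$ outside a set of small measure (obtained by deleting thin collars around the boundaries of all dyadic cubes and using that $H$ is measure preserving). It then applies the second-moment method to the occupation time $T_\epsilon$ of a ball $B(x,\epsilon)$ by $B-H$ restricted to that good set: the reverse-H\"{o}lder bound tames the correlation integral, one gets $\E (T_\epsilon)\gtrsim\epsilon^d$ and $\E(T_\epsilon^2)\lesssim\epsilon^{2d}$, and Paley--Zygmund yields a positive hitting probability for each fixed point; the restriction $\alpha<3/(2(d+1))$, compatible with $\alpha>1/d$ only when $d\ge3$, is what keeps the near-diagonal contribution small. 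You instead run a direct nested-cube survival argument --- in effect transplanting the covering idea from the proof of Theorem \ref{thm:main} to the non-overlapping dyadic cubes of the standard curve --- and pay for the lack of overlap by discarding the points lost in the $W_m$-collars of faces and splitting hyperplanes; the total loss is governed by $\sum_m 2^mW_m\asymp\sum_m\sqrt m\,2^{-m(d-2)/2}$, convergent exactly when $d\ge3$. Your version is more elementary (no moment computation) and gives the stronger almost-sure conclusion that the range of $B-H$ has positive Lebesgue measure, filling all but an arbitrarily small fraction of a dyadic cube; the paper's version gives a quantitative lower bound on the probability of hitting any prescribed point, plus a reverse-H\"{o}lder lemma of independent interest. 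Two points in your write-up deserve an explicit sentence but are easily supplied: the H\"{o}lder constant in your first paragraph needs the usual adjustment of taking $k$ maximal with $|t-s|\le2^{-dk}$ (so the constant becomes $4\sqrt d$ rather than $2\sqrt d$), and one should record that on $\Omega_1$ survival at step $m-1$ forces $y_m$ to lie inside $C^{(m)}$, so that stopping at step $m$ can only happen inside the collar --- which is exactly the fact your final measure estimate uses.
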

In the proof of Theorem \ref{thm:standard Hilbert} we take $f$
to be the standard $d$-dimensional Hilbert curve.

\section{Hilbert curves}\label{sec:heuristics}
To prove Theorem \ref{thm:main} we will construct space filling
curves that remain space filling even after being perturbed by
Brownian motion. These curves are modifications of the standard
Hilbert curve construction which is briefly described below, for
dimension two.

\subsection{The standard Hilbert curve}
Start with the unit square $[0,1]^2$. In the first iteration,
subdivide the unit square into four sub-squares of side length
$1/2$ and join their centers via three line segments. Choose a
direction for the resulting path. In iteration $n+1$, take the
path from iteration $n$, rescaled by $1/2$, and put one copy in
each of the four sub-squares, rotated clockwise by $\pi/2$, $0$,
$0$, $3\pi/2$ in the bottom-left, top-left, top-right and
bottom-right sub-square, respectively. Then connect the four
copies by adding three connection segments. The limiting curve
obtained by following this iterative construction is called the
Hilbert curve.
Figure \ref{fig:Hilbert} illustrates the first
four steps in the standard Hilbert curve construction, where we
always traverse the path starting at the left bottom square and
ending in the right bottom square.


\begin{figure}[h]
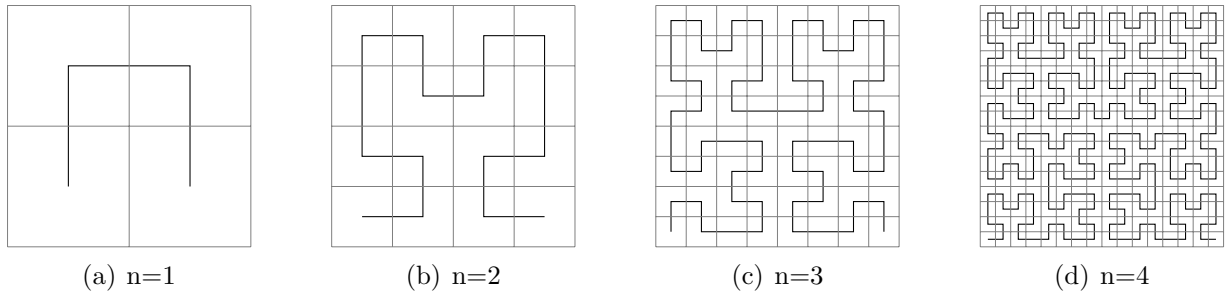

  \begin{center}
    \subfigure[n=1]{\tikz[scale=16]\hilbert((0.5mm,0.5mm),1,2mm);}
    \hfill
    \subfigure[n=2]{\tikz[scale=8]\hilbert((0.5mm,0.5mm),2,4mm);}
    \hfill
   \subfigure[n=3]{\tikz[scale=4] \hilbert((0.5mm,0.5mm),3,8mm);}
    \hfill
   \subfigure[n=4]{\tikz[scale=2] \hilbert((0.5mm,0.5mm),4,16mm);}
 \end{center}
  \caption{The first 4 steps in the Hilbert curve construction}
  \label{fig:Hilbert}
\end{figure}


An equivalent description of the Hilbert curve is as follows.
For $n\ge 1$, at iteration $n,$ the unit square is partitioned
into a collection of $4^n$ squares of side length $2^{-n}$. Each of the $4^n$ squares
are labeled by some string $a_1,\dots,a_n$ with entries in
$\{0,1,2,3\}$, which determines the order in which the squares are traversed. The main idea is to find an algorithm that
recursively determines the order in which sub-squares are
traversed given the order of their ''ancestor'' squares. Not
picking the right order causes the limiting function not to be
continuous. The requirement is that for any sequence $a_1 \dots
a_n$ of elements in $\{0,1,2,3\}$ with $a_n \in \{0,1,2\}$, the
square labeled by $a_1\dots a_n 33 \dots 3$ is very close to the
square $a_1\dots (a_n+1) 00 \dots 0$.

\begin{definition}\label{def: traversal order}
The \textit{traversal order} will be described by functions $h$
satisfying:
\begin{itemize}
\item [(i)] $h:\{0,1,2,3\}\to\{(-1,-1),(1,-1),(1,1),(1,-1)\}$ is
one-to-one,
\item [(ii)] $h(\{0,2\})=\{(-1,-1),(1,1)\}$,
\item [(iii)] $h(\{1,3\})=\{(1,-1),(-1,1)\}$.
\end{itemize}
\end{definition}
Suppose the string $a_1,\dots,
a_n$ denotes the labeling of a square in $\mathcal{F}_n$. The
traversal order of its four sub-squares is given by the function
$f_{a_1,\dots, a_n}$, which we now construct
recursively.
For the empty string $\emptyset$ define $f_\emptyset$ as
\begin{equation}\label{eq: starting function}
(f_\emptyset(0),f_\emptyset(1),f_\emptyset(2),f_\emptyset(3)) =
((-1,-1),(-1,1),(1,1),(1,-1)).
\end{equation}
\begin{alg}[Standard Hilbert]\label{alg:main}
Given $f_{a_1,\dots, a_{n-1}}$ and $a_n$ define $f_{a_1, \dots, a_n}$ to satisfy conditions in Definition \ref{def: traversal order} and
\begin{itemize}
\item if $a_n \in \{1,2\}$,
let $f_{a_1,\dots, a_n}=f_{a_1,\dots, a_{n-1}}$.
\item
if $a_n=0$, let $f_{a_1,\dots, a_n}(0) = f_{a_1,\dots,
a_{n-1}}(0)$,
$f_{a_1,\dots, a_n}(1) = f_{a_1,\dots,
a_{n-1}}(3)$,
\item
if $a_n=3$, let $f_{a_1,\dots, a_n}(0) = f_{a_1,\dots,
a_{n-1}}(2)$,
$f_{a_1,\dots, a_n}(1) = f_{a_1,\dots, a_{n-1}}(1).$
\end{itemize}
\end{alg}

Now it
is not hard to give a more precise definition of the standard
Hilbert curve. Let $0.a_1\dots a_n \dots$ denote the $4$-ary
expansion of $t\in[0,1]$. We construct functions $H_n:[0,1]\to
[0,1]^2$ by setting
$\displaystyle{H_1(t)=\left(\frac{1}{2},\frac{1}{2}\right)+\frac{1}{4}f_\emptyset(a_1)}$
and for $n\ge 2$,
\begin{equation}\label{eq: standard Hilbert recursion}
H_{n}(t)=H_{n-1}(t)+\left(\frac{1}{2}\right)^{n+1}
f_{a_1,\dots,a_{n-1}}(a_{n}).
\end{equation}
Let $\displaystyle{H(t)=\lim_{n\to \infty}H_n(t)}$ be the
resulting Hilbert curve. $H$ is well defined, space filling and
 $1/2$-H\"{o}lder continuous (see Chapter 2 in \cite{Sagan}).
Hence, by Theorem \ref{thm:LG}, $H$ is polar for planar Brownian motion.
Also note that the Hilbert curve is self-similar,
through the mappings from the construction, and measure
preserving with Lebesgue measure on both the unit interval and
the unit square.

\subsection{Generalized Hilbert curves}
We now modify this construction in order to obtain curves that
are space filling and H\"{o}lder continuous with the exponent
slightly less than $ 1/2$, but which remain space filling after
perturbation by planar Brownian motion. The curve will be
constructed on overlapping squares, using the same algorithm as
above, but a different parametrization.

\subsubsection{Main idea} Fix $\alpha>1/2$. At level 1, we subdivide the unit
square into four sub-squares of side length $1/2$. Cover each of
these sub-squares with a slightly larger (centered) square of
side length $\alpha$, as in Figure \ref{fig:Hilbert2}. Iterate
the procedure.
Thus, in step $n$, each of the $4^n$ squares of
side length $\alpha^n$ are covered by four overlapping squares of side length
$\alpha^{n+1}$. We want to find a labeling
that determines the order in which these overlapping squares are
traversed, so that we obtain a continuous
function in the limit. Naively, let us first try the labeling
and parametrization used for the standard Hilbert curve, and to
time $k/4^n$ assign one of the $4^n$ squares of side length
$\alpha^n$ as in the Hilbert algorithm.
The curve in the first
iteration is the same as the standard Hilbert curve and the next two iterations are shown in Figure \ref{fig:Hilbert2}.

\begin{figure}[h]
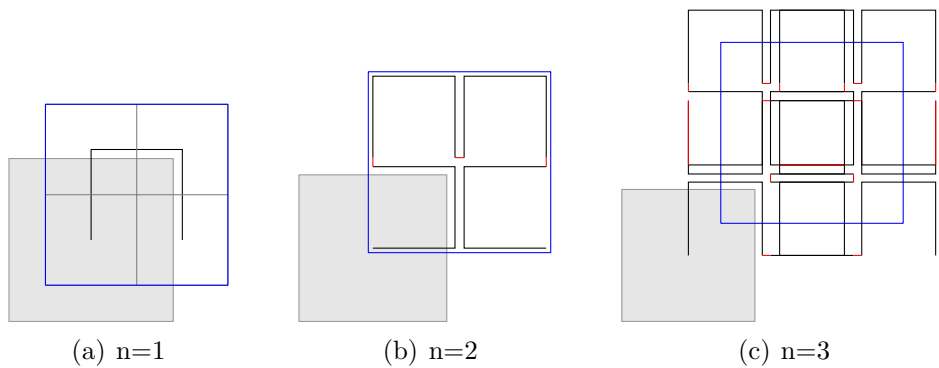

  \begin{center}
    \subfigure[n=1]{
    \tikz[scale=3]{
    \filldraw[color=gray!80, fill=gray!20]
    (-1.6mm,-1.6mm) rectangle (5.6mm,5.6mm);
    \draw[black](2mm,2mm)--(2mm,6mm)--(6mm,6mm)--(6mm,2mm);
    \draw[step=4mm, gray, thin] (0mm,0mm) grid (8mm,8mm);
    \draw[step=8mm, blue, thin] (0mm,0mm) grid (8mm,8mm);
    }
    }
    \hspace{.2in}
    \subfigure[n=2]{\tikz[scale=3]{
    \filldraw[color=gray!80, fill=gray!20](-3.04mm,-3.04mm) rectangle (3.44mm,3.44mm);
    \draw[step=8mm, blue, thin] (0mm,0mm) grid (8mm,8mm);
    \draw[black](.2mm,.2mm)--(3.8mm,.2mm)--(3.8mm,3.8mm)--(.2mm,3.8mm);
   \draw[black](.2mm,4.2mm)--(.2mm,7.8mm)--(3.8mm,7.8mm)--(3.8mm,4.2mm);
   \draw[black](4.2mm,4.2mm)--(4.2mm,7.8mm)--(7.8mm,7.8mm)--(7.8mm,4.2mm);
   \draw[black](7.8mm,3.8mm)--(4.2mm,3.8mm)--(4.2mm,.2mm)--(7.8mm,.2mm);
   \draw[red](.2mm,3.8mm)--(.2mm,4.2mm);
   \draw[red](3.8mm,4.2mm)-- (4.2mm,4.2mm);
   \draw[red](7.8mm,4.2mm)-- (7.8mm,3.8mm);
    } }
    \hspace{.2in}
    \subfigure[n=3]{\tikz[scale=3]{
    \filldraw[color=gray!80, fill=gray!20](-4.336mm,-4.336mm) rectangle (1.496mm,1.496mm);
    \draw[step=8mm, blue, thin] (0mm,0mm) grid (8mm,8mm);
   \draw[black](-1.42mm, -1.42mm)--(-1.42mm, 1.82mm)--(1.82mm, 1.82mm)--(1.82mm, -1.42mm);
   \draw[black](2.18mm,-1.42mm)--(5.42mm,-1.42mm)--(5.42mm,1.82mm)--(2.18mm,1.82mm);
   \draw[black](2.18mm,2.18mm)--(5.42mm,2.18mm)--(5.42mm,5.42mm)--(2.18mm,5.42mm);
   \draw[black](1.82mm,5.42mm)--(1.82mm,2.18mm)--(-1.42mm,2.18mm)--(-1.42mm,5.42mm);
   \draw[black](-1.42mm,2.58mm)--(1.82mm,2.58mm)--(1.82mm,5.82mm)--(-1.42mm,5.82mm);
  \draw[black](-1.42mm,6.18mm)--(-1.42mm,9.42mm)--(1.82mm,9.42mm)--(1.82mm,6.18mm);
  \draw[black](2.18mm,6.18mm)--(2.18mm,9.42mm)--(5.42mm,9.42mm)--(5.42mm,6.18mm);
  \draw[black](5.42mm,5.82mm)--(2.18mm,5.82mm)--(2.18mm,2.58mm)--(5.42mm,2.58mm);
  \draw[black](2.58mm,2.58mm)--(5.82mm,2.58mm)--(5.82mm,5.82mm)--(2.58mm,5.82mm);
  \draw[black](2.58mm,6.18mm)--(2.58mm,9.42mm)--(5.82mm,9.42mm)--(5.82mm,6.18mm);
  \draw[black](6.18mm,6.18mm)--(6.18mm,9.42mm)--(9.42mm,9.42mm)--(9.42mm,6.18mm);
  \draw[black](9.42mm,5.82mm)--(6.18mm,5.82mm)--(6.18mm,2.58mm)--(9.42mm,2.58mm);
  \draw[black](9.42mm,5.42mm)--(9.42mm,2.18mm)--(6.18mm,2.18mm)--(6.18mm,5.42mm);
  \draw[black](5.82mm,5.42mm)--(2.58mm,5.42mm)--(2.58mm,2.18mm)--(5.82mm,2.18mm);
  \draw[black](5.82mm,1.82mm)--(2.58mm,1.82mm)--(2.58mm,-1.42mm)--(5.82mm,-1.42mm);
  \draw[black](6.18mm,-1.42mm)--(6.18mm,1.82mm)--(9.42mm,1.82mm)--(9.42mm,-1.42mm);
\draw[red](-1.42mm,5.42mm)--(-1.42mm,2.58mm);
  \draw[red](5.42mm,2.58mm)--(2.58mm,2.58mm);
  \draw[red](9.42mm,2.58mm)--(9.42mm,5.42mm);
  \draw[red](1.82mm,-1.42mm)--(2.18mm,-1.42mm);
  \draw[red](2.18mm,1.82mm)--(2.18mm,2.18mm);
  \draw[red](2.18mm,5.42mm)--(1.82mm,5.42mm);
  \draw[red](-1.42mm,5.82mm)--(-1.42mm,6.18mm);
  \draw[red](1.82mm,6.18mm)--(2.18mm,6.18mm);
  \draw[red](5.42mm,6.18mm)--(5.42mm,5.82mm);
  \draw[red](2.58mm,5.82mm)--(2.58mm,6.18mm);
  \draw[red](5.82mm,6.18mm)--(6.18mm,6.18mm);
  \draw[red](9.42mm,6.18mm)--(9.42mm,5.82mm);
  \draw[red](6.18mm,5.42mm)--(5.82mm,5.42mm);
  \draw[red](5.82mm,2.18mm)--(5.82mm,1.82mm);
  \draw[red](5.82mm,-1.42mm)--(6.18mm,-1.42mm);
  } }
\end{center}
  \caption{First 3 steps in the new construction with $\alpha=.9$. The blue square is the
unit square, red lines are connections between copies of lower
level iterations, and the gray squares denote typical covering
squares at each level.}
  \label{fig:Hilbert2}
\end{figure}

The limiting function is space filling, but it is \emph{not
continuous}. To see this, first note that following the same
algorithm as in the Hilbert curve construction leads to
eventually starting and ending the curve outside of the unit
square. In figure \ref{fig:Hilbert2}, when $\alpha=.9$, this
happens already in the third step.
For a given $\alpha$, there will be an $\epsilon$ and an integer
$m$ such that the starting and the ending point of the curve
after $m^{th}$ iteration is at least $\epsilon$ distance, in
each coordinate direction, outside the unit square.
Following the standard Hilbert algorithm the endpoints of the part of the curve in the top-left sub-square should converge to the image of $1/2$. The same is true for the starting points of the part of curve in the top-right sub-square. However after $m^{th}$ iteration these starting points and endpoints will be (in each coordinate direction) at distance at least $\alpha \epsilon$ from the center of the unit square. By symmetry their distance will be at least $2\alpha \epsilon$.
Therefore their limits will not be equal which implies the discontinuity at $1/2$.

The main idea in our construction is the following: in each
iteration, we assign three intervals of time reserved for
connecting the four descendents. Choosing the right way to
assign these connection times, along with the appropriate
expansion factor for the squares, will allow us to obtain a
H\"{o}lder continuous curve of any exponent less than $1/2$. The
details for this construction can be found below.

\subsubsection{Scaled squares}\label{sec:boxes}
 Let $0<\alpha <1$ be
given and set $\beta = \frac{2\alpha - 1}{4}$ which might be negative if $\alpha < 1/2$.

Analogous to the standard Hilbert curve construction, the set of
squares at iteration $n+1$, denoted by $\mathcal{F}_{n+1}$, is
constructed from $\mathcal{F}_n$ by taking
$\mathcal{F}_0=\left\{[0,1]^2\right\}$ and if $[a,a+\delta]\times [b,b+\delta]$
is in $\mathcal{F}_n$, its descendents in $\mathcal{F}_{n+1}$
are
$$
[a+(i/2-\beta)\delta,a+((i+1)/2+ \beta)\delta] \times [b+(j/2-\beta)\delta,
b+((j+1)/2+\beta)\delta],
$$
for $i,j \in \{0,1\}$.
 We define the partial ordering on $\cup_{n\geq 0}\mathcal{F}_n$ by setting
 $Q_1 \leq Q_2$ if $Q_2$ is a descendent of $Q_1$ and extending it by transitivity.
Note that the side length of squares in $\mathcal{F}_n$ is equal
to $\alpha^n$ and if $(x,y)$ is the center of a square in
$\mathcal{F}_n$ then the centers of its four descendents are
given by $(x,y) + \frac{1}{4}\alpha^n (i,j)$, where
$i,j\in\{-1,1\}$. If $Q_1\in \mathcal{F}_n$ then $\cup_{Q_1 \leq
Q_2}Q_2$ is equal to $Q_1$ if $\alpha \leq 1/2$ and  otherwise
is a square with the same center as $Q_1$, but with side length
$(1+\frac{2\beta}{1-\alpha})\alpha^n =
\frac{\alpha^n}{2(1-\alpha)}.$ We denote the square $\cup_{Q_1
\leq Q_2}Q_2$ by $\scale(Q_1)$. In particular if $\alpha > 1/2$,
the union of all squares, over all generations, gives the square
$[-\frac{\beta}{1-\alpha},1+\frac{\beta}{1-\alpha}]^2.$

\subsubsection{Generalized Hilbert curve - construction}\label{sec:type I}
Let $\rho<1/4$ and $\rho < \alpha < 1$ be given. First we construct a
Cantor set $\textsf{C}_{\rho}$ as follows. At stage 1, divide
the unit length interval into 7 subintervals out of which we
keep four closed intervals of length $\rho$. The collection of
the intervals still kept at stage 1 is
\[\textsf{C}_{\rho}^1:=\left\{
[0,\rho], \left[\frac{1-\rho}{3},\frac{1+2\rho}{3}\right],
\left[\frac{2-2\rho}{3},\frac{2+\rho}{3}\right],
[1-\rho,1]\right\}.\] The removed open intervals will be called
\emph{connection intervals}. At stage $n$, each remaining closed
interval of length $\rho^{n-1}$ from $\textsf{C}_{\rho}^{n-1}$
is divided into seven subintervals out of which four closed
intervals are being kept. For example, if $[a,a+\rho^{n-1}]\in
\textsf{C}_{\rho}^{n-1}$, then we have four descendents of this
interval
\[\left\{
\left[a+\frac{k}{3}(1-\rho)\rho^{n-1}\, , \,
a+\rho^n+\frac{k}{3}(1-\rho)\rho^{n-1}\right]: k=0,1,2,3
\right\} \subset \textsf{C}_{\rho}^n.\]

Let $\textsf{C}_{\rho}=\cap_{n=1}^{\infty} \textsf{C}_{\rho}^n$
be the resulting Cantor set. For every $t\in \textsf{C}_{\rho}$,
there exists a unique sequence $\{a_n\}_{n=1}^{\infty}$ with $a_n\in
\{0,1,2,3\}$, such that
$t=\frac{1-\rho}{3\rho}\sum_{n=1}^{\infty}a_n \rho^n$. Let us
call this sequence the \emph{$\rho$-expansion} of $t$.

Recursively define the sequence of functions $\tg_n \colon
\textsf{C}_{\rho} \to \mathbb{R}^2$ as follows. Abusing
notation, denote the $\rho$-expansion of $t$ by $0.a_1a_2\dots$
and let $\tg_1 (t) = (\frac{1}{2},\frac{1}{2}) +
\frac{1}{4}f_\emptyset(a_1)$. For $n \geq 2$, let
\begin{equation}\label{eq: generalized Hilbert recursion}
\tg_n(t)=\tg_{n-1}(t)+\frac{\alpha^{n-1}}{4}f_{a_1,\dots,a_{n-1}}(a_n),
\end{equation}
where the functions $f_{a_1,\dots,a_n}$ are defined according to
Algorithm \ref{alg:main}.
Let $\displaystyle{\tg(t)=\lim_{n\to\infty}\tg_n(t)}$ for all
$t\in \textsf{C}_{\rho}$ and linearly interpolate between these
values to obtain a continuous curve on $[0,1]$. It is an easy
exercise to find a formula for $\tg(t)$ when $t\notin
\textsf{C}_{\rho}$, but we will not need it here. The function
is clearly well defined and continuous. We will show next that
$G$ is H\"{o}lder continuous and moreover space-filling (its range covers an open set) for $\alpha \geq 1/2$.

\subsubsection{Properties of generalized Hilbert curves}\label{Properties of generalized Hilbert curves}
The range of $\tg_n$ will be exactly the set of centers of the
squares in $\mathcal{F}_n$.
For any $t \in \textsf{C}_{\rho}$ whose $\rho$-expansion starts
with $0.a_1 \dots a_n$ denote by $D_n(t)$ the set of numbers in
$\textsf{C}_{\rho}$ whose $\rho$-expansion starts with the same
$n$ digits. If $G_n(t)$ is the center of $Q \in \mathcal{F}_n$
then numbers $r \in D_n(t)$ will be mapped by $G_{n+1}$ into the
centers of descendents of $Q$. Therefore $G_{n+k}(r) \in \scale
(Q)$ for all $k \geq 0$ and $r \in D_n(t)$, in particular after
taking limits $G(r) \in \scale (Q)$.

We claim $\tg$ is space filling if $\alpha \geq 1/2$, more precisely, the image of
$G$ covers the entire square $\scale([0,1]^2) =
[-\frac{\beta}{1-\alpha},1+\frac{\beta}{1-\alpha}]^2$. Let $U$ be
an open set that intersects the interior of the square
$[-\frac{\beta}{1-\alpha},1+\frac{\beta}{1-\alpha}]^2$. Then $U$
contains a square $\scale(Q)$ for some $Q \in
\mathcal{F}_n$, and for a sufficiently large $n$. As argued
before, the image of $\tg$ will intersect $\scale(Q)$ and
thus $U$. Therefore, the image of $\tg$ is dense in
$[-\frac{\beta}{1-\alpha},1+\frac{\beta}{1-\alpha}]^2$ and by
compactness, it is equal to the whole square
$[-\frac{\beta}{1-\alpha},1+\frac{\beta}{1-\alpha}]^2$.

We will now compute the H\"{o}lder exponent for $\tg$. First it
is important to note that if the interval
$(a,a+\frac{1-4\rho}{3}\rho^n)$ is a connection interval removed
at level $(n+1)$ in the construction of $\textsf{C}_{\rho}$ then
$|\tg(a)-\tg(a+\frac{1-4\rho}{3}\rho^n)|=C\alpha^n$, where $C = \frac{2\beta}{1-\alpha}$ if $\alpha>1/2$.
Also note that 
the
endpoints of connection intervals belong to $\textsf{C}_{\rho}$.

Let $t, s$ in $[0,1]$, with $|t-s| < \frac{1-4\rho}{3}\rho^{n-1}$
be given. There are a few cases to be considered.
\begin{itemize}
\item[(i)] If $t, s\in
\textsf{C}_{\rho}$, then they have common first $n$ digits in their $\rho$-expansions.
Hence $\tg_n(t) = \tg_n(s)$ 
and by the
discussion above, there exists a constant $C_1$, depending only
on $\alpha,$ such that
\[|\tg(t)-\tg(s)|\le C_1 \,\alpha^n.\]
\item[(ii)] If $t, s \notin \textsf{C}_{\rho}$
and they belong to the same connection interval
$(a,a+\frac{1-4\rho}{3}\rho^k)$ for some $k$, then
\[|\tg(t)-\tg(s)|=
\frac{|\tg(a)-\tg(a+\frac{1-4\rho}{3}\rho^k)|}{\frac{1-4\rho}{3}\rho^k}
|t-s| \le C_2\, \alpha^n,\] for some constant $C_2$ which
depends on $\alpha$ and $\rho$. Here we have used the fact that
$\alpha/\rho>1$.
The same argument holds for $s$ inside a
connection interval and $t$ being one of the endpoints of this
interval.
\item[(iii)] If $t\in
\textsf{C}_{\rho}$ but $s\notin \textsf{C}_{\rho}$, and $t<s$,
let $a$ be the left endpoint of the connection interval
containing $s$. Then
\[|\tg(t)-\tg(s)|\le |\tg(t)-\tg(a)|+|\tg(a)-\tg(s)|\le C_1\,
\alpha^n+C_2\,\alpha^n,\] from cases (i) and (ii). Similarly, we
use the right endpoint of the connection interval if $t>s$.
\item[(iv)] If $t, s \notin \textsf{C}_{\rho}$ but they belong
to disjoint connection intervals, and $t<s$, we let $a$ be the
right endpoint of the connection interval containing $t$. Then
using cases (ii) and (iii),
\[|\tg(t)-\tg(s)|\le |\tg(t)-\tg(a)|+|\tg(a)-\tg(s)|\le C_1\,
\alpha^n+2C_2\,\alpha^n.\]
\end{itemize}
From here we see that $\tg$ is $\frac{\log\alpha}{\log\rho}$- H\"{o}lder continuous. Note that $\rho$ can be made
arbitrarily close to $1/4$ and if we let $\alpha\downarrow 1/2$
and $\rho\uparrow 1/4$ simultaneously, we can get a  H\"{o}lder
exponent arbitrarily close to $1/2$.

\begin{remark}\label{remark: sharp continuity}
Choosing different expansion scales for different iterations,
one can construct a single function that is $\gamma$-H\"{o}lder for all
$\gamma<1/2$. In the $n$-th level of the construction we need to take $\alpha_n$ as the expansion factor for the squares and $\rho_n$ as scaling factor for the Cantor set so that $\alpha_n \downarrow 1/2$ and $\rho_n \uparrow 1/4$.
\end{remark}

\section{Proof of Theorem \ref{thm:main} ($d=2$)}

Let $B_t$ be a planar Brownian motion. Almost surely, there is a
(random) $\epsilon$ such that, for any $s \leq \epsilon$ and any
$0 \leq t \leq 1-s$, we have $|B_{t+s} - B_t| \leq
3\sqrt{s\log(1/s)}$ (see Theorem 1.14 in \cite{PM}). The result
now follows from the theorem below.

\begin{theorem}\label{thm: main general version}
Let $\tg$ be the generalized Hilbert curve constructed with
$\alpha>1/2$. Let $h \colon [0,1] \to \mathbb{R}^2$ be a
function for which there is a constant $C$ such that, for $s$
small enough and $0 \leq t \leq 1-s$, we have $|h(t+s) - h(t)|
\leq C\sqrt{s\log (1/s)}$. Then the range of the function
$\tg+h$ covers an open set.
\end{theorem}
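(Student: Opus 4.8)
The plan is to exhibit an explicit ball contained in the range of $\tg+h$. Write $\rho<1/4$ for the Cantor scale used to build $\tg$, so that the level-$n$ intervals of $\textsf C_\rho$ have length $\rho^n$ and the boxes of $\mathcal F_n$ have side $\alpha^n$. The numerical fact I would exploit throughout is $\sqrt\rho<\tfrac12<\alpha$, hence $q:=\sqrt\rho/\alpha<1$: if $r,r'$ lie in a common level-$n$ interval then $|r-r'|\le\rho^n$, so for $n$ beyond a threshold (where the hypothesis on $h$ kicks in) $|h(r)-h(r')|\le C\sqrt{\rho^n\log(1/\rho^n)}=\epsilon_n\alpha^n$, with $\epsilon_n:=C\sqrt{n\log(1/\rho)}\,q^n\to0$ and $\sum_n\epsilon_n<\infty$. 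Thus on each level-$n$ interval $h$ displaces points by only an $\epsilon_n$-fraction of $\alpha^n$, and these fractions are summable. I would set $E_n:=\sum_{m\ge n}\epsilon_m\alpha^m$, so that $E_n=\epsilon_n\alpha^n+E_{n+1}$ and $E_n/\alpha^n\to0$.

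The technical heart is a perturbation-stability lemma for the self-similar tiling of Section~\ref{sec:boxes}. For $Q\in\mathcal F_n$ the set $\scale(Q)$ is a square of side $L_n=\tfrac{\alpha^n}{2(1-\alpha)}$; its four immediate descendants $Q^{(k)}$ have $\scale(Q^{(k)})$ a square of side $\alpha L_n$ with centre offset by $\tfrac{1-\alpha}{2}L_n$ in each coordinate, and $\bigcup_k\scale(Q^{(k)})=\scale(Q)$. Crucially, since $\alpha>\tfrac12$, adjacent tiles do not merely abut but overlap in strips of width $(2\alpha-1)L_n$, a fixed positive multiple of $\alpha^n$. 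I would prove: there is $c_0=c_0(\alpha)>0$ such that if the tiles $\scale(Q^{(k)})$ are translated by vectors $v^{(k)}$ all within $\eta'$ of a common $v$, and $\eta',\eta''\ge0$ satisfy $\eta'+\eta''\le c_0\alpha^n$, then every point of $\scale(Q)+v$ at distance $\ge\eta'+\eta''$ from the boundary already lies in the $\eta''$-erosion of some $\scale(Q^{(k)})+v^{(k)}$. The proof is elementary: place the point in the quadrant of $\scale(Q)$ meeting the corresponding child; its two ``inner'' faces lie a distance $\ge c_0\alpha^n$ inside $\scale(Q)$, along its two ``outer'' faces the point retains its distance to $\partial\scale(Q)$, and translating by $v^{(k)}$ costs at most $\eta'$. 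For $\alpha=\tfrac12$ the tiles only touch, a translation opens a gap, and the scheme collapses; this is exactly where the hypothesis $\alpha>1/2$ enters.

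Granting this, I would fix $n_0$ large enough that $E_{n_0}<\tfrac{\alpha^{n_0}}{4(1-\alpha)}$ and $E_n\le c_0\alpha^n$ for all $n\ge n_0$ (possible since $E_n/\alpha^n\to0$), choose any $Q_{n_0}\in\mathcal F_{n_0}$ with level-$n_0$ interval $D_{n_0}\subset\textsf C_\rho$ and left endpoint $t_{n_0}\in\textsf C_\rho$, and let $\mathbf B$ be the closed ball of radius $\tfrac{\alpha^{n_0}}{4(1-\alpha)}-E_{n_0}>0$ about the centre of $\scale(Q_{n_0})+h(t_{n_0})$, so that $\mathbf B$ lies in the $E_{n_0}$-erosion of $\scale(Q_{n_0})+h(t_{n_0})$. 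To show $\mathbf B\subseteq\mathrm{range}(\tg+h)$, fix $x\in\mathbf B$ and recursively build nested level-$n$ intervals $D_{n_0}\supseteq D_{n_0+1}\supseteq\cdots$ with associated boxes $Q_{n_0}\le Q_{n_0+1}\le\cdots$ and left endpoints $t_n\in\textsf C_\rho$, such that for every $n\ge n_0$, $x$ lies in the $E_n$-erosion of $\scale(Q_n)+h(t_n)$. The base case is the definition of $\mathbf B$; for the step, the children's left endpoints all lie in $D_n$, so their $h$-values are within $\epsilon_n\alpha^n$ of $h(t_n)$, and the lemma (with $\eta'=\epsilon_n\alpha^n$, $\eta''=E_{n+1}$, $\eta'+\eta''=E_n\le c_0\alpha^n$) produces a child $Q_{n+1}$ with $x$ in the $E_{n+1}$-erosion of $\scale(Q_{n+1})+h(t_{n+1})$. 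Finally $\bigcap_n D_n$ is a nested intersection of nonempty compact sets, hence contains some $\tau$; by the construction of $\tg$ we have $\tg(\tau)\in\scale(Q_n)$ for all $n$, so $(\tg+h)(\tau)$ is within $\epsilon_n\alpha^n$ of $\scale(Q_n)+h(t_n)$, while $x\in\scale(Q_n)+h(t_n)$. Since $\mathrm{diam}\,\scale(Q_n)=O(\alpha^n)\to0$, letting $n\to\infty$ forces $(\tg+h)(\tau)=x$; as $x\in\mathbf B$ was arbitrary, the interior of $\mathbf B$ lies in the range.

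I expect the main obstacle to be precisely this perturbation-stability lemma together with the cross-scale erosion bookkeeping: one must verify that the overlaps in the self-similar tiling — which exist only because $\alpha>1/2$ and have width of order $\alpha^n$ — are wide enough to absorb the displacement $h$ introduces at level $n$, which is only $\epsilon_n\alpha^n$ with $\epsilon_n\to0$, and that the resulting budget $E_n=o(\alpha^n)$ permits a single fixed ball, rather than a ball shrinking to a point, to persist all the way down the tree.
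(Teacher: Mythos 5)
Your proposal is correct and rests on the same mechanism as the paper's own proof: since $\alpha>1/2$, the squares attached to the four children of a level-$n$ Cantor interval cover their parent with slack of order $\alpha^n$ (the paper's shift tolerance $\beta\alpha^{n+1}$, your overlap width $(2\alpha-1)L_n$), while $h$ displaces points of that interval by only $C\sqrt{\rho^n n\log(1/\rho)}=o(\alpha^n)$, so the covering survives the perturbation at every scale beyond a threshold $n_0$ and one recurses down the tree. The only real difference is the final extraction: the paper keeps full (uneroded) squares of side $\alpha^n$ recentred at $\tg_m(r)+h(r)$ and concludes that the image of $\tg+h$ is dense in a fixed level-$n_0$ square, hence covers it by compactness, whereas you erode the $\scale$-tiles by the summable budget $E_n$ and, for each target $x$, produce a nested sequence of Cantor intervals whose intersection yields an explicit time $\tau$ with $(\tg+h)(\tau)=x$ --- a valid, slightly more bookkeeping-heavy rendering of the same argument.
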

\begin{proof}
Pick $n_0$ large enough such that the assumption on $h$ holds
for all $s \leq \rho^{n_0-1}$ and such that $\beta \alpha^{n}
\geq C\sqrt{\rho^{n}n\log(1/\rho)}$ for all $n \geq n_0$.
Let $Q_n(p)$ denote a square of side
length $\alpha^n$ centered at $p$, and recall the definition of $D_n(t)$ from
subsection \ref{Properties of generalized Hilbert curves}.
By construction, for any $t \in \textsf{C}_{\rho}$, the square
$Q_n(\tg_n(t)+h(t))$ is covered by the squares
$Q_{n+1}(\tg_{n+1}(r)+h(t))$, for $r \in D_n(t)$, even when each
of them is shifted by no more than $\beta \alpha^{n+1}$. Now by
assumption on $h$, the squares $Q_{n+1}(\tg_{n+1}(r)+h(r))$, $r
\in D_n(x)$ cover the square $Q_{n}(\tg_n(t)+h(t))$ whenever $n
\geq n_0$. Therefore, for all $m \geq  n \geq n_0$  the square
$Q_{n}(\tg_{n}(t)+h(t))$ is covered by the squares
$Q_m(\tg_m(r)+h(r)),$ where $r$ ranges over the set $D_n(t)$.

Let $U$ be an open set that intersects
$Q_{n_0}(\tg_{n_0}(t)+h(t))$ for some $t \in \textsf{C}_{\rho}$.
By the discussion above, one can find $m$ large enough and $r
\in D_{n_0}(t)$ such that the square $Q$ with the center at
$\tg_m(r)+h(r)$ and side length equal to $C\sqrt{\rho^{m}m\log
(1/\rho)}+ \frac{\alpha^m}{2(1-\alpha)}$ lies in $U$. If $s \in
D_m(r)$, then $\tg(s)+h(s)$ lies in $Q$ and thus in $U$. This
implies that the image of $\tg+h$ is dense in the square
$Q_{n_0}(\tg_{n_0}(t)+h(t))$ and, by compactness, it covers it.
\end{proof}

\begin{remark}\label{remark: sharp estimates}
As mentioned in Remark \ref{remark: sharp continuity}, by taking different $\alpha_n$ and $\rho_n$ at different stages in the construction of the Hilbert curve, we can obtain a generalized Hilbert curve which is $\gamma$-H\"{o}lder continuous for all $\gamma < 1/2$. Actually such curves can be constructed in a way so that they satisfy the two-dimensional case of Theorem \ref{thm:main}. To do this we only need to ensure that Brownian motion does not destroy the overlap of squares at different levels and the condition for this is that for large enough $n$ we have
$$
\beta_{n+1}\prod_{i=1}^n \alpha_i \geq C\sqrt{-\Big(\prod_{i=1}^{n}\rho_i\Big)\log \Big(\prod_{i=1}^{n}\rho_i\Big)},
$$
where $\beta_{n+1} = \frac{2\alpha_{n+1}-1}{4}$.
Such sequences $\alpha_n$ and $\rho_n$ can be constructed with $\alpha_n \downarrow 1/2$ and $\rho_n \uparrow 1/4$. For example take $\alpha_n = \frac{e^{1/n}}{2}$ and $\rho_n = \frac{e^{-1/n}}{4}$.
\end{remark}

\section{Construction of a reverse H\"{o}lder continuous drift}
In this section we describe an alternate construction of a
function which added to standard Brownian motion will give an
almost surely space-filling curve. While its construction is
slightly more complicated, this function has the advantage that
it is reverse H\"{o}lder continuous.
\begin{definition}\label{def; lower holder}
We say that a continuous function $f \colon \mathbb{R} \to \mathbb{R}^d$ is \textit{reverse $\alpha$-H\"{o}lder continuous}, for $0 < \alpha < 1$, if for some $C>0$ and any open interval $I$ of length $|I|$ we can find $s,t \in I$ such that $|f(s)-f(t)| \geq C|I|^\alpha$.
\end{definition}
The notion of reverse H\"{o}lder continuity function is closely
related to the geometric properties of its graph. For example
graphs of functions which are both  $\alpha$-H\"{o}lder and
reverse $\alpha$-H\"{o}lder continuous (like the function we
will construct in this section) have Hausdorff dimension bigger
than 1, see Theorem 4 in \cite{PU}. Clearly, the previously
constructed generalized Hilbert curve is not reverse H\"{o}lder
continuous since it is linear outside the Cantor set
$\textsf{C}_{\rho}$.

Let $\alpha>1/2$ be given and recall that $\beta=(2\alpha-1)/4$.
Let $S$ be a set of positive integers such that $\sum_{i \in
S}\alpha^i = \frac{2\beta}{1-\alpha}$. This is the set of
\emph{exceptional times} when our construction deviates from the
standard Hilbert algorithm. One way to construct this subset is
to define $\lambda_1 :=\frac{2\beta}{1-\alpha}$ and, at step
$i$, include in $S$ the smallest integer $j$, larger than all
integers already in $S$, which has the property $\alpha^j \leq
\lambda_i$ and set $\lambda_{i+1}=\lambda_i - \alpha^j$. We call
the string $a_1, \dots, a_n$ $0$-\textit{reverse}
($3$-\textit{reverse}) if $a_1, \dots, a_{n}$ ends in exactly
$k$ $0$'s ($k$ $3$'s) for some $k$ with $k+1 \in S$. This means
$a_{n-k+1} = \dots = a_{n}= 0 \neq a_{n-k}$, and similarly for
$3$'s. For a string that ends in several consecutive $0$'s
($3$'s) we define its \textit{prestring} to be the string
obtained by deleting the maximal block of consecutive $0$'s
($3$'s) at the end.

\begin{alg}[Alternate generalized Hilbert]
Define $f_\emptyset$ by (\ref{eq: starting function}).
Let $a_1,\dots, a_n$ be given and let $a_1, \dots, a_m$ be its
prestring. Given $f_{a_1,\dots, a_{n-1}}$ and $a_n$ define $f_{a_1, \dots , a_n}$ to satisfy the conditions in Definition \ref{def: traversal order} and
\begin{itemize}
\item if $a_n \in \{1,2\}$, let $f_{a_1,\dots a_n}=f_{a_1,\dots, a_{n-1}}$.
\item if $a_n=0$, let $f_{a_1,\dots, a_n}(1) = f_{a_1,\dots,
a_{n-1}}(3)$ and
\begin{itemize}
\item if $a_1,\dots, a_n$ is $0$-reverse, let $f_{a_1,\dots, a_n}(0) =
f_{a_1,\dots, a_m}(2)$.
\item if $a_1,\dots, a_n$ is not $0$-reverse,
let $f_{a_1,\dots, a_n}(0) = f_{a_1,\dots, a_m}(0)$.
\end{itemize}
\item if $a_n=3$, let $f_{a_1,\dots, a_n}(0) =
f_{a_1,\dots, a_{n-1}}(2)$ and
\begin{itemize}
\item if $a_1,\dots, a_n$ is $3$-reverse, let $f_{a_1,\dots, a_n}(1) =
f_{a_1,\dots, a_m}(3)$.
\item if $a_1,\dots, a_n$ is not $3$-reverse, let $f_{a_1,\dots, a_n}(1) =
f_{a_1,\dots, a_m}(1)$.
\end{itemize}
\end{itemize}
\end{alg}

For $n \geq 1$ let $\mathbb{Q}_n=\{k4^{-n}: k \in
\mathbb{Z}, 0 \leq k\leq 4^{n} \}$.
For every $0 \leq t \leq 1$ consider its $4$-ary
expansion $t=0.a_1 a_2 \dots$. Now define $\widetilde{G}_0$ to
be the constant function mapping into the center of the unit
square $[0,1]^2$ and for $n\ge 1$, let the sequence of functions
$\widetilde{G}_n$ be given by recursion (\ref{eq: generalized
Hilbert recursion}). This is a Cauchy sequence of functions and
so finally define $\widetilde{G}$ as their limit.


\begin{figure}[h]
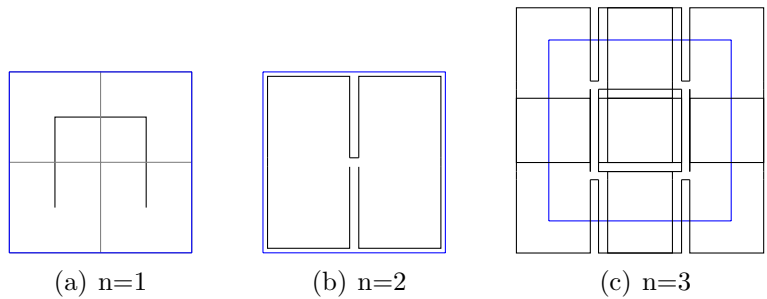

  \begin{center}
    \subfigure[n=1]{
    \tikz[scale=3]{
    \draw[black](2mm,2mm)--(2mm,6mm)--(6mm,6mm)--(6mm,2mm);
    \draw[step=4mm, gray, thin] (0mm,0mm) grid (8mm,8mm);
    \draw[step=8mm, blue, thin] (0mm,0mm) grid (8mm,8mm);
    }
    }
    \hspace{.2in}
    \subfigure[n=2]{\tikz[scale=3]{
    \draw[step=8mm, blue, thin] (0mm,0mm) grid (8mm,8mm);
    \draw[black](3.8mm,3.8mm)--(3.8mm,.2mm)--(.2mm,.2mm)--(.2mm,3.8mm);
   \draw[black](.2mm,4.2mm)--(.2mm,7.8mm)--(3.8mm,7.8mm)--(3.8mm,4.2mm);
   \draw[black](4.2mm,4.2mm)--(4.2mm,7.8mm)--(7.8mm,7.8mm)--(7.8mm,4.2mm);
   \draw[black](7.8mm,3.8mm)--(7.8mm,.2mm)--(4.2mm,.2mm)--(4.2mm,3.8mm);
   \draw[black](.2mm,3.8mm)--(.2mm,4.2mm);
   \draw[black](3.8mm,4.2mm)-- (4.2mm,4.2mm);
   \draw[black](7.8mm,4.2mm)-- (7.8mm,3.8mm);
    } }
    \hspace{.2in}
    \subfigure[n=3]{\tikz[scale=3]{
    \draw[step=8mm, blue, thin] (0mm,0mm) grid (8mm,8mm);
   \draw[black](5.42mm,5.42mm)--(2.18mm,5.42mm)--(2.18mm,2.18mm)--(5.42mm,2.18mm);
   \draw[black](5.42mm,1.82mm)--(5.42mm,-1.42mm)--(2.18mm,-1.42mm)--(2.18mm,1.82mm);
   \draw[black](1.82mm, 1.82mm)--(1.82mm, -1.42mm)--(-1.42mm, -1.42mm)--(-1.42mm, 1.82mm);
   \draw[black](-1.42mm,2.18mm)--(-1.42mm,5.42mm)--(1.82mm,5.42mm)--(1.82mm,2.18mm);
   \draw[black](5.42mm,2.18mm)--(5.42mm,1.82mm);
   \draw[black](2.18mm,1.82mm)--(1.82mm, 1.82mm);
   \draw[black](-1.42mm, 1.82mm)--(-1.42mm,2.18mm);
   \draw[black](1.82mm,5.82mm)--(1.82mm,2.58mm)--(-1.42mm,2.58mm)--(-1.42mm,5.82mm);
  \draw[black](-1.42mm,6.18mm)--(-1.42mm,9.42mm)--(1.82mm,9.42mm)--(1.82mm,6.18mm);
  \draw[black](2.18mm,6.18mm)--(2.18mm,9.42mm)--(5.42mm,9.42mm)--(5.42mm,6.18mm);
  \draw[black](5.42mm,5.82mm)--(5.42mm,2.58mm)--(2.18mm,2.58mm)--(2.18mm,5.82mm);
  \draw[black](-1.42mm,5.82mm)--(-1.42mm,6.18mm);
  \draw[black](1.82mm,6.18mm)--(2.18mm,6.18mm);
  \draw[black](5.42mm,6.18mm)--(5.42mm,5.82mm);
  \draw[black](5.82mm,5.82mm)--(5.82mm,2.58mm)--(2.58mm,2.58mm)--(2.58mm,5.82mm);
  \draw[black](2.58mm,6.18mm)--(2.58mm,9.42mm)--(5.82mm,9.42mm)--(5.82mm,6.18mm);
  \draw[black](6.18mm,6.18mm)--(6.18mm,9.42mm)--(9.42mm,9.42mm)--(9.42mm,6.18mm);
  \draw[black](9.42mm,5.82mm)--(9.42mm,2.58mm)--(6.18mm,2.58mm)--(6.18mm,5.82mm);
  \draw[black](2.58mm,5.82mm)--(2.58mm,6.18mm);
  \draw[black](5.82mm,6.18mm)--(6.18mm,6.18mm);
  \draw[black](9.42mm,6.18mm)--(9.42mm,5.82mm);
  \draw[black](6.18mm,2.18mm)--(6.18mm,5.42mm)--(9.42mm,5.42mm)--(9.42mm,2.18mm);
  \draw[black](9.42mm,1.82mm)--(9.42mm,-1.42mm)--(6.18mm,-1.42mm)--(6.18mm,1.82mm);
  \draw[black](5.82mm,1.82mm)--(5.82mm,-1.42mm)--(2.58mm,-1.42mm)--(2.58mm,1.82mm);
  \draw[black](2.58mm,2.18mm)--(5.82mm,2.18mm)--(5.82mm,5.42mm)--(2.58mm,5.42mm)
;
  \draw[black](9.42mm,2.18mm)--(9.42mm,1.82mm);
  \draw[black](6.18mm,1.82mm)--(5.82mm,1.82mm);
  \draw[black](2.58mm,1.82mm)--(2.58mm,2.18mm);
\draw[black](1.82mm,2.18mm)--(1.82mm,5.82mm);
\draw[black](2.18mm,5.82mm)--(5.82mm,5.82mm);
\draw[black](6.18mm,5.82mm)--(6.18mm,2.18mm);
  } }
\end{center}
  \caption{First 3 steps in the new construction with $\alpha=.9$. Note that the curves start and
end inside the unit square, drawn in blue.}
  \label{fig:Hilbert3}
\end{figure}

Space filling property of this function is obvious.

Now we prove that the constructed function is both $\frac{\log
(1/\alpha)}{\log 4}$-H\"{o}lder and reverse $\frac{\log
(1/\alpha)}{\log 4}$-H\"{o}lder continuous. For the reverse
H\"{o}lder continuity notice that any interval $I\subset [0,1]$
contains a $4$-ary interval $J$, of the form $[k4^{-n},
(k+1)4^{-n}]$, of length at least $|I|/8$. Moreover any
$4$-ary interval $J$ of length $4^{-n}$ is mapped by
$\widetilde{G}$ onto a square of side length
$\frac{\alpha^n}{2(1-\alpha)} = \frac{1}{2(1-\alpha)}|J|^{\log
(1/\alpha)/\log 4}$. Pick $s,t \in J$ so that $\widetilde{G}(s)$
and $\widetilde{G}(t)$ are vertices of one side of the square
$\widetilde{G}(J)$. We see that
$$
|\widetilde{G}(s)-\widetilde{G}(t)| = \frac{1}{2(1-\alpha)}|J|^{\log (1/\alpha)/\log 4} \geq \frac{\alpha^{3/2}}{2(1-\alpha)}|I|^{\log(1/\alpha)/\log 4}.
$$
If $s \in
\mathbb{Q}_n$ and $t$ is any number whose $4$-ary expansion
agrees in the first $n$ digits with $s$, then
$|\widetilde{G}(t)-\widetilde{G}(s)| \leq
\frac{\alpha^n}{2(1-\alpha)}$. Take any $0 \leq t_1 < t_2 \leq
1$, and $n$ such that $t_2-t_1 \leq 4^{-n}$ and $q_1, q_2 \in
\mathbb{Q}_n$ such that $0 \leq t_i - q_i \leq 4^{-n}$, for
$i=1,2$. Then we have $|\widetilde{G}(t_i) - \widetilde{G}(q_i)|
\leq \frac{\alpha^n}{2(1-\alpha)}$ for $i=1,2$ and
$|\widetilde{G}(t_2)-\widetilde{G}(t_1)| \leq
\frac{\alpha^n}{1-\alpha} +
|\widetilde{G}(q_1)-\widetilde{G}(q_2)|$. Since either $q_1 =
q_2$ or $q_2 = q_1+4^{-n}$ we see that it suffices to prove the
H\"{o}lder continuity for points in $\bigcup_{n}\mathbb{Q}_n$.

Let $s,t \in \mathbb{Q}_n$ with $t=s+4^{-n}$. If the $4$-ary
expansion of $s$ does not end in $3$ then $s$ and $t$ have equal
first $n-1$ digits and so $|\widetilde{G}(t)-\widetilde{G}(s)|
\leq \frac{1}{\alpha(1-\alpha)} \alpha^n$ and we are done.
Otherwise let the $4$-ary expansion of $s$ be given by $s=0.a_1
\dots a_m 3 \dots 3$ with $k=n-m$ threes in the end and $a_m
\neq 3$. Then obviously $t$ can be written in base $4$ as
$t=0.a_1 \dots a_{m-1}(a_m+1)0 \dots 0$. Define $s_l = 0.a_1
\dots a_m 3 \dots 3$ with $l$ threes in the end, so in
particular $s_k = s$. Without loss of generality assume $f_{a_1,
\dots , a_{m-1}} = f_\emptyset$. Denote the point
$\widetilde{G}_{m-1}(t) = \widetilde{G}_{m-1}(s)$ by $p$. Let us
verify the claim for $a_m=1$ since cases $a_m \in \{0,2\}$ are
similar.

If $a_m=1$, then $f_{a_1, \dots, a_{m}} = f_\emptyset = f_{a_1, \dots,
(a_{m}+1)}$ and, relative to $p$, the coordinates
of $\widetilde{G}_m(s)$ and $\widetilde{G}_m(t)$ are
$(-\frac{\alpha^{m-1}}{4},\frac{\alpha^{m-1}}{4})$ and
$(\frac{\alpha^{m-1}}{4},\frac{\alpha^{m-1}}{4}),$ respectively.
Now, for any $l>0$, the point $\widetilde{G}_{m+l}(s_{l})$ is constructed
from $\widetilde{G}_{m+l-1}(s_{l-1})$ by shifting it by
$\frac{\sqrt{2}}{4}\alpha^{m+l-1}$ in the lower right direction,
unless $l \in S$ when we have to shift the point by
the same distance in the upper left direction.
From the definition of $S$ it is clear that $\widetilde{G}_{m+l}(s_{l})$
will converge to $p$ as $l \to \infty$ and moreover that $|\widetilde{G}_{m+l}(s_l) - p| \leq \sqrt{2}\alpha^m\sum_{j \geq l}\alpha^j = \frac{\sqrt{2}}{1-\alpha}\alpha^{m+l}$. In particular for $l=k$ this implies that
\begin{equation}\label{eq: approximation in the construction}
|\widetilde{G}(s) - p| \leq |\widetilde{G}(s) - \widetilde{G}_{n}(s)| + |\widetilde{G}_n(s)-p| \leq \frac{1+2\sqrt{2}}{2(1-\alpha)}\alpha^n = \frac{1+2\sqrt{2}}{2(1-\alpha)} (t-s)^{\log(1/\alpha)/\log 4}.
\end{equation}
Similarly, we argue that $\widetilde{G}_{m+l}(t)$ also converges to $p$ which implies $\widetilde{G}(t)=p$ and plugging this into (\ref{eq: approximation in the construction}) gives the wanted bound.

Adding $\tilde{G}$ to planar Brownian motion results in a space
filling process. The proof of this fact is analogous to the
proof of Theorem \ref{thm:main} when $d=2$, hence we do not
include it here. In $d$-dimensions ($d\ge 3$), one can also
construct a continuous curve with H\"{o}lder exponent
arbitrarily close to $1/d$,
with the same properties as $\tilde{G}$, by changing the
traversal algorithm given in the $d$-dimensional Hilbert curve
construction. Such a higher dimensional curve, analogous to
$\tilde{G}$, can be described using a modification of Butz's
algorithm from \cite{Butz1}.

\section{Proof of Theorem \ref{thm:main} ($d>2$)}
To prove Theorem \ref{thm:main} in dimensions higher than 2, we
construct generalized Hilbert curves in the same way as in two
dimensions.
There are algorithms that generate standard $d$-dimensional
Hilbert curves in dimensions higher than two, which can be
realized through the labeling algorithm we described for two
dimensions. One example is the Butz algorithm, see \cite{Butz2}
and \cite{Butz1}. These algorithms are completely determined by
functions $f_{a_1, \dots , a_n}$ where $a_i \in \{0,1, \dots ,
2^d-1\}$. Throughout this section $f_{a_1, \dots , a_n}$ will
stand for these functions. The Hilbert curve is defined as the
limit of the functions defined recursively by formulae analogous
to (\ref{eq: standard Hilbert recursion}). As in two dimensional
case, the Hilbert curve is again measure preserving. We do not
put here the explicit algorithms for generating functions
$f_{a_1, \dots , a_n}$ as they are relatively complicated. See
also \cite{Sagan} for a geometric description of space filling
curves in higher dimensions.

Let $\rho<2^{-d}$ be given and define $\textsf{C}_{\rho,d}$ by
iteratively removing, at step $n+1$, exactly $2^{nd}(2^d-1)$
(open) connection intervals of length
$\frac{1-(2^d)\rho}{2^d-1}\rho^{n}$ and keeping $2^{(n+1)d}$
intervals of length $\rho^{n+1}$. As in the 2-dimensional case,
every $t\in \textsf{C}_{\rho,d}$ has a unique representation
\[t=K\sum_{n=1}^{\infty}a_n\rho^n,
\hspace{.5in} a_n\in\{0,1\dots,2^d-1\},\] with
$K=\frac{1-\rho}{(2^d-1)\rho}$. We will call $0.a_1a_2\dots$ the
$(\rho,d)-expansion$ of $t$.
Fix an $\alpha>1/2$ and define functions $\tg_0^d$ to be the constant function mapping to the center of the unit cube $[0,1]^d$ and $\tg_n^d$ recursively by (\ref{eq: generalized Hilbert recursion}), with the understanding that the
traversal algorithm and hence the functions $f_{a_1,\dots,a_n}$
correspond to the $d$-dimensional Hilbert curve. Then define $\tg^d$ as the limit of functions $\tg_n^d$.
Again, for
$t\notin \textsf{C}_{\rho,d},$ the values $\tg^d(t)$ are defined
by interpolation. The resulting curve is clearly well-defined
and continuous. 
 H\"{o}lder continuity and space covering property for $\tg^d$ follows by the same argument
as in two dimensional case. 
 The H\"{o}lder exponent is again
$\frac{\log\alpha}{\log\rho}$. We leave the details to the
reader. Taking $\rho$ close to $1/2^d$, and $\alpha$ close to
$1/2$, leads to continuous functions with H\"{o}lder exponents
arbitrarily close to $1/d$.


\begin{proof}[Proof of Theorem \ref{thm:main} ($d>2$)]
As in the two dimensional case, there exists a constant $C>0$
such that, almost surely, for every small enough $s>0$ and all
$0\le t\le 1-s$,
\begin{equation}\label{MC}
|B_{t+s}-B_t|\le C\sqrt{s\log(1/s)}.
\end{equation}
 Suppose $n_0$ is large enough
such that condition \eqref{MC} holds for $s\le \rho^{n_0-1}$ and such
that, for all $n\ge n_0$,
\begin{equation}\label{C2}
\beta\alpha^n\ge C \sqrt{\rho^{n}n\log(1/\rho)}.
\end{equation}
For $t\in \textsf{C}_{\rho,d}$ with $(\rho,d)$-expansion starting with
$0.a_1\dots a_n$ define $D_n(t)$ as the set of numbers $r \in \textsf{C}_{\rho,d}$ whose $(\rho,d)$-expansion starts with the same digits.
Take the box $Q_{n_0}$ of side length
$\alpha^{n_0}$ centered at $B_t-\tg^d_{n_0}(t)$. Just as
in the proof of Theorem \ref{thm: main general version}, it is easy to see
that 
the $2^d$ boxes of
side length $\alpha^{n_0+1}$ centered at $B_r-\tg^d_{n_0+1}(r)$, for $r \in D_{n_0}(t)$,
cover $Q_{n_0}$. Iterating, we see that for all $m\ge 1,$ the box
$Q_{n_0}$ is covered by the $2^{md}$ boxes of side length
$\alpha^{n_0+m}$ centered at
$B_r-\tg^d_{n_0+m}(r),$ for $r \in D_{n_0+m}(t).$ Then
given an arbitrary point $x\in Q_{n_0}$, one can find a sequence of
boxes $\{Q_{n_0+m}\}_{m\ge 1}$, of side length $\alpha^{n_0+m}$, centered as above, that contain
$x$. From here it easily follows that $x$ is in the image of $B-\tg^d$, which completes the
proof.
\end{proof}

\begin{remark}\label{remark; hausdorff}
Let $G$ be a generalized $d$-dimensional Hilbert curve with
$\alpha < 1/2$.
These cases are also interesting since $B-G$ is not space
filling and moreover, for $\rho < \alpha^2$, we can compute the
Hausdorff dimension of its range to be exactly $\max\left(d
\frac{\log 2}{\log (1/\alpha)},2\right)$.
To prove this notice that since $[0,1] \backslash \textsf{C}_{\rho,d}$ is a countable union of disjoint intervals, Hausdorff dimension of $(B-G)([0,1]\backslash \textsf{C}_{\rho,d})$ is equal to 2. Therefore it is enough to prove that $(B-G)(\textsf{C}_{\rho,d})$ has Hausdorff dimension equal to 
$d\frac{\log 2}{\log (1/\alpha)}$. Denote with $Q_n(p)$ the cube of side length
${\alpha^n}$, centered at $p$. It is easy to check that there is
a $\gamma>0$ such that, for all $t \in \textsf{C}_{\rho,d}$, the
cubes $Q_{n+1}(B_t-G_{n+1}(r))$ for $r \in D_n(t)$ will be
disjoint and contained in the cube $Q_n(B_t-G_n(t))$, even if
each of the smaller cubes is shifted by no more than $\gamma
\alpha^{n+1}$. Now knowing the modulus of continuity of Brownian
motion and using the same argument as in the proof of Theorem
\ref{thm:main} (and the fact that $\rho < \alpha^2$) we see that
for some (random) $n_0$ and all $n \geq n_0$ cubes
$Q_{n+1}(B_r-G_{n+1}(r))$ for $r \in D_n(t)$ will be disjoint
and contained in the cube $Q_n(B_t-G_n(t))$, even after being
shifted by no more than $\gamma \alpha^{n+1}/2$. As a
consequence, for any $m \geq n_0$ and $r \in D_m(t)$ every two
of the points $B_r-G_{m}(r)$ are at the distance greater than
$\gamma \alpha^m /2$. This immediately implies that the
Hausdorff dimension of the image $(B-G)(\textsf{C}_{\rho,d})$ is
almost surely bounded by $d \frac{\log 2}{\log (1/\alpha)}$.
For the lower bound we can use the Mass Distribution
Principle (see 4.2, Chapter 4 in \cite{Falconer}). For a fixed
$t \in \textsf{C}_{\rho,d}$ we define a measure on the part of
$(B-G)(\textsf{C}_{\rho,d})$ which is contained in
$Q_{n_0}(B_t-G_{n_0}(t))$. The measure is simply defined by
assigning mass $2^{-kd}$ to each of the disjoint cubes
$Q_{n_0+k}(B_s-G_{n_0+k}(s))$ for $s \in D_{n_0}(t)$. Then any
ball of radius $\gamma \alpha^{n_0+m}/2$ is assigned no more
than $2^{-md}$ mass and the claim follows by the Mass
Distribution Principle.

Therefore, for any $2<\lambda<d$ and any $\kappa<1/\lambda$, we
can find a generalized $d$-dimensional Hilbert curve which is
$\kappa$-H\"{o}lder continuous and such that almost surely the
range of Brownian motion with this drift has Hausdorff dimension
equal to $\lambda$, by taking $\alpha=\rho^{\kappa} =
2^{-d/\lambda}$ in our construction.

\end{remark}

\section{Proof of Theorem \ref{thm:standard Hilbert}}
We will prove Theorem \ref{thm:standard Hilbert} by showing that for $d>2$ the range of
$d$-dimensional Brownian motion with drift given by the standard
$d$-dimensional Hilbert curve hits points with positive probability.
The key ingredient in the proof is  the following lemma which shows that, for any $\alpha>1/d$, the Hilbert curve satisfies the reverse H\"{o}lder inequality for all points from a set of large measure.

\begin{lemma}\label{lemma: Holder}
For $d\ge 3$, let $H:[0,1]\to [0,1]^d$ denote the standard
$d$-dimensional Hilbert curve. Then for any $\alpha>1/d$ and any
$\epsilon>0$, there is a set $S \subset [0,1]$ of Lebesgue
measure greater than $1-\epsilon$ and a $C>0$ such that
$|H(s)-H(t)|\geq C|s-t|^\alpha$ for all $s,t \in S$.
\end{lemma}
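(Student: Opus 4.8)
The plan is to delete from $[0,1]$ the preimage under $H$ of a thin neighbourhood of the union of all dyadic grids, chosen so that the deleted part has small measure, and then to check that on the remaining set the reverse H\"{o}lder inequality holds with the claimed exponent. For $n\ge 0$ let $\mathcal{F}_n$ denote the collection of dyadic subcubes of $[0,1]^d$ of side $2^{-n}$ appearing in the Hilbert construction, set $W_n:=\bigcup_{Q\in\mathcal{F}_n}\partial Q$, and for $\delta>0$ write $B(W_n,\delta)$ for the set of points within distance $\delta$ of $W_n$.

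The first step is the volume estimate
\[
\mathrm{vol}\big(B(W_n,2^{-n-m})\big)\le 2d\,2^{-m},\qquad n,m\ge 0,
\]
proved by induction on $n$. For $n=0$ one has $W_0=\partial[0,1]^d$, whose $\delta$-shell has volume $1-(1-2\delta)^d\le 2d\delta$, and the case $m=0$ is trivial since the left side is at most $1\le 2d$; these are the base cases. For the inductive step one uses the self-similarity of the dyadic grid: for any $Q\in\mathcal{F}_1$, rescaling $Q$ to $[0,1]^d$ by the factor $2$ carries $B(W_n,2^{-n-m})\cap Q$ exactly onto $B(W_{n-1},2^{-(n-1)-m})$, so $\mathrm{vol}\big(B(W_n,2^{-n-m})\cap Q\big)=2^{-d}\,\mathrm{vol}\big(B(W_{n-1},2^{-(n-1)-m})\big)\le 2^{-d}\cdot 2d\,2^{-m}$ by the induction hypothesis, and summing over the $2^d$ cubes in $\mathcal{F}_1$ closes the induction.

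Now fix $\alpha>1/d$ and $\epsilon>0$, pick $c>0$ (to be chosen small), and set $K:=\bigcup_{n\ge 0}B\big(W_n,\,c\,2^{-nd\alpha}\big)$ and $S:=[0,1]\setminus H^{-1}(K)$. Writing $c\,2^{-nd\alpha}=2^{-n-m_n}$ with $2^{-m_n}=c\,2^{-n(d\alpha-1)}$ and applying the estimate above term by term, $\mathrm{vol}(K)\le 2dc\sum_{n\ge 0}2^{-n(d\alpha-1)}$, which converges because $d\alpha>1$; hence $c$ can be chosen so that $\mathrm{vol}(K)<\epsilon$, and since $H$ is measure preserving, $\mathrm{Leb}(S)=1-\mathrm{vol}(K)>1-\epsilon$. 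Moreover every point of $H(S)$ avoids $W_n$ for all $n$, hence for each $n$ lies in the interior of a unique cube $Q_n\in\mathcal{F}_n$, and $Q_0\supset Q_1\supset\cdots$ with diameters tending to $0$. From the self-similar structure of the Hilbert curve, $H$ maps the closed dyadic time-interval of length $2^{-dn}$ attached to $Q\in\mathcal{F}_n$ onto $\overline{Q}$, so $H^{-1}(\mathrm{int}\,Q)$ lies inside that closed interval; in particular two points of $S$ with a common image would lie in a nested sequence of such intervals of lengths $2^{-dn}\to 0$, so $H$ is injective on $S$.

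Finally take distinct $s,t\in S$, so $H(s)\ne H(t)$, and let $n\ge 1$ be the largest integer for which $H(s)$ and $H(t)$ lie in the same cube $Q\in\mathcal{F}_{n-1}$ (finite because the nested cubes shrink to the two distinct points). Then $s,t$ both lie in the closed time-interval attached to $Q$, whence $|s-t|\le 2^{-d(n-1)}$, while $H(s)$ and $H(t)$ lie in the interiors of two distinct subcubes $Q_s\ne Q_t$ of $Q$ in $\mathcal{F}_n$. The segment $[H(s),H(t)]$ therefore leaves $Q_s$ through $\partial Q_s\subseteq W_n$, say at $p$, which is strictly interior to the segment (it equals neither endpoint, as those lie off $W_n$); since $H(s),H(t)\notin B(W_n,c\,2^{-nd\alpha})$,
\[
|H(s)-H(t)|=|H(s)-p|+|p-H(t)|\;\ge\;2c\,2^{-nd\alpha}\;\ge\;2c\,2^{-d\alpha}\,|s-t|^{\alpha},
\]
so the lemma holds with $C=2^{1-d\alpha}c$. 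The step needing the most care is the inductive volume bound together with the realization that one must delete neighbourhoods of the grids at \emph{every} scale: this is simultaneously what makes $H$ injective on $S$ (so that the critical level $n$ is well defined) and what guarantees the chord from $H(s)$ to $H(t)$ genuinely crosses $W_n$ at a point separated from both endpoints, rather than merely touching it.
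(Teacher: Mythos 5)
Your proposal is correct and follows essentially the same route as the paper: the same neighbourhoods $B(W_n,2^{-n-m})$ with the same inductive volume bound $2d\,2^{-m}$, the same exceptional set $K=\bigcup_n B(W_n,c\,2^{-nd\alpha})$ with $S=H^{-1}(K^c)$ and the measure-preserving property of $H$, and the same final estimate yielding $C=2^{1-d\alpha}c$. The only difference is that you spell out the injectivity of $H$ on $S$ and the well-definedness of the maximal level $n$, details the paper leaves implicit.
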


\begin{proof}
As usual $\mathcal{F}_n$ is the set of dyadic cubes used in the
construction of the Hilbert curve. Define $W_n := \bigcup_{Q \in
\mathcal{F}_n}\partial Q$. For $\delta>0$ we define
$B(W_n,\delta)$ as the set of points in $[0,1]^d$ at distance no more than
$\delta$ from $W_n$.

First we show that the volume of the set $B(W_n,2^{-n-m})$ is
bounded from above by $2d2^{-m}$. We prove this by induction.
For $n=0$, this is trivial since $W_0$ is just the boundary of
the original cube whose ($d-1$)-dimensional volume is equal to
$2d$. When $m=0$, the claim is also trivial since
$B(W_n,2^{-n-m})$ is equal to the whole original cube. In the
general case, notice that the intersection of $B(W_n,2^{-n-m})$
with a cube from $\mathcal{F}_1$ is equal to
$B(W_{n-1},2^{-n-m+1})$, scaled by a factor of $1/2$ and
translated accordingly. The induction assumption implies that
the volume of this intersection is bounded from above by
$2d2^{-d} 2^{-m}$. Summing over all cubes in $\mathcal{F}_1$
proves the claim.

For $c>0$ define $K:=\bigcup_{n\geq 0}B(W_n, c2^{-nd\alpha })$ and set
$S:=H^{-1}(K^c)$. Since the volume of $K$ is less than $c\sum_{n
\geq 0}2^{-n(d\alpha-1)}$ and $H$ is measure preserving, we can
choose the constant $c$ to make the Lebesgue measure of $S$
arbitrarily close to $1$.

To check that $S$ satisfies the desired property, take $s,t \in
S$ and let $n$ be the largest integer so that $H(s)$ and $H(t)$
are contained in the same box in $\mathcal{F}_{n-1}$. Then
$|s-t| \leq 2^{-d(n-1)}$, but $|H(s)-H(t)| \geq
2c2^{-nd\alpha}$, since $H(s)$ and $H(t)$ are in $K^c$ and
inside different boxes in $\mathcal{F}_n$. This proves the claim with $C =
2^{1-d\alpha}c$.
\end{proof}


\begin{proof}[Proof of Theorem \ref{thm:standard Hilbert}]
Let $1/d<\alpha < \frac{3}{2(d+1)} \leq 1/2$ be given and let $H$ denote the regular
$d$-dimensional Hilbert curve. Let $J$ be a set of positive
measure with $|H(s)-H(t)| \geq C|s-t|^\alpha$ for all $s,t \in
J$, for some constant $C>0$, as in Lemma \ref{lemma: Holder}.
Without loss of generality we can assume $J$ is closed.

Fix $\delta>0$ and $x \in \mathbb{R}^d$, and set
$J_\delta=[\delta,1]\cap J$. We let $T_\epsilon$ be the amount
of time in $J_\delta$ that the range of function $t\mapsto B_t-H(t)$
spends in $\B:=B(x,\epsilon)$, the ball of radius $\epsilon$
around $x$. We will apply the second moment method to
$T_\epsilon$. Using Fubini and the fact that the density of
$d$-dimensional Brownian motion is bounded from below on $\B$ when $t\ge
\delta$, we have
\begin{equation}\label{eq: first moment}
\mathbb{E}(T_\epsilon) = \int_{J_\delta} \mathbb{P}(B_t-H(t)
\in \B)dt \geq b\epsilon^d,
\end{equation}
for some constant $b$ depending only on $\delta$ and $d$. To
find the second moment of $T_\epsilon$, apply Fubini to obtain
\begin{equation}\label{eq: second moment}
\mathbb{E}(T_\epsilon^2) = 2\int_{J_\delta}\mathbb{P}(B_s-H(s) \in \B)\int_{J_\delta\cap
[s,1]} \mathbb{P}(B_t-H(t) \in
\B|B_s-H(s) \in \B )dtds.
\end{equation}
By bounding density for $s \in J_\delta$ we have $\mathbb{P}(B_s-H(s) \in \B) \leq C_1
v_d\epsilon^d$, where $C_1$ depends on $\delta$ and $v_d$
denotes the volume of the unit ball in $d$ dimensions. Thus we
only need to estimate the inner integral in (\ref{eq: second moment}).
Define the set
$$
\tilde{J}_s=\{t \in J:t\geq s, |H(t)-H(s)| > 3\epsilon\}.
$$
The inner integral in (\ref{eq: second moment}) can be separated
into three parts. Let
\begin{align*}
& A_1:=\int_{[s,1]\cap (J_\delta \backslash \tilde{J}_s)}\mathbb{P}(B_t-H(t) \in \B |B_s-H(s) \in \B )dt \\
& A_2:=\int_{[s+(4\epsilon/C)^{1/\alpha},1] \cap J_\delta}\mathbb{P}(B_t-H(t) \in \B|B_s-H(s) \in \B )dt \\
& A_3:=\int_{[s,s+(4\epsilon/C)^{1/\alpha}] \cap J_\delta \cap
\tilde{J}_s}\mathbb{P}(B_t-H(t) \in \B|B_s-H(s) \in \B )dt.
\end{align*}
Because $H$ is measure preserving, the Lebesgue measure of
$J\backslash \tilde{J}_s$ can be bounded above by $v_d
3^d\epsilon^d$.
Thus, using a trivial bound for the probability
inside the integral, we have
\[A_1 \leq v_d3^d\epsilon^d.\]
For the second integral, notice that the distance between the
balls $B(x+H(s),\epsilon)$ and $B(x+H(t),\epsilon)$ is at least
$C(t-s)^\alpha -2\epsilon \geq C(t-s)^\alpha/2$. Therefore, by
bounding the density,
$$
A_2 \leq \int_{s+(4\epsilon/C)^{1/\alpha}}^1 v_d\epsilon^d
\,\frac{\exp\{-C^2(t-s)^{2\alpha-1}/8\}}{(2\pi(t-s))^{d/2}} dt
< C_2 v_d\epsilon^d,
$$
for some constant $C_2$, where we use the fact that $\lim_{r \downarrow 0}\exp (-r^p)r^q < \infty$ for $p < 0$ and any $q$.
Finally, if both $B_s-H(s)$ and $B_t-H(t)$ are in $\B$ and $t
\in \tilde{J}_s$, then it must hold that $|B_t-B_s| \geq
\epsilon$ and so
\begin{multline*}
A_3 \leq \int_s^{s+(4\epsilon/C)^{1/\alpha}}
\mathbb{P}\left(\frac{|B_t-B_s|}{\sqrt{t-s}} \geq
\frac{\epsilon}{\sqrt{t-s}}\right)dt \\
\leq \int_s^{s+(4\epsilon/C)^{1/\alpha}} C'\frac{\sqrt{t-s}}{\epsilon} dt =
\frac{2C'}{3}\Big(\frac{4}{C}\Big)^{3/(2\alpha)}\epsilon^{3/(2\alpha)-1},
\end{multline*}
where the second inequality follows by Markov inequality.
Since $\alpha < \frac{3}{2(d+1)}$, it follows that $A_3\le
C_3 \epsilon^d,$ for some constant $C_3$.

Now summing all the bounds, we have that $
\mathbb{E}(T_\epsilon^2) \leq K\epsilon^{2d} $ , for some $K>0$
and by Paley-Zygmund inequality we obtain
$$
\mathbb{P}(B_t-H(t) =x, \text{ for some } t) \geq \lim_{\epsilon
\to 0}\mathbb{P}(T_\epsilon > 0) \geq
\lim_{\epsilon
\to 0}\frac{\mathbb{E}(T_\epsilon)^2}{\mathbb{E}(T_\epsilon^2)} \geq
\frac{b^2}{K}.$$




\end{proof}


Theorem \ref{thm:standard Hilbert} tells us that even after the Brownian perturbation the standard Hilbert curve has positive volume. It would be interesting to see whether it also remains space filling.

We end the paper by giving a proof, different from the one in \cite{LeGall}, of Le~Gall's result that any function satisfying (\ref{d condition}) is polar. It is based on the fact that images of such functions have zero volume.
In the following $|S|$ will denote the diameter of a set $S$.
If $f \colon \mathbb{R}^+ \to \mathbb{R}^d$ satisfies (\ref{d condition}) then for any $\epsilon > 0$ we can find a $\delta > 0$ such that for any set $S \subset \mathbb{R}$ of diameter $|S| \leq \delta$ we have $|f(S)| \leq \epsilon |S|^{1/d}$. Covering an interval $I$ of length $l$ by intervals $I_1, \dots I_n$ of length $l/n$ for $n$ large enough, we obtain a cover $f(I_j)$, $1 \leq j \leq n$ for the image $f(I)$ which satisfies $\sum_{j}|f(I_j)|^d \leq \epsilon l$. Therefore the $d$-dimensional Hausdorff measure
of $f(I)$ is zero.
Since Brownian motion is almost surely $\alpha$-H\"{o}lder continuous for any $\alpha<1/2$ we have that $B-f$ satisfies the same condition (\ref{d condition}) and therefore its image has volume zero almost surely. Now the result follows by an application of Fubini and standard arguments.


\begin{thebibliography}{1}

\bibitem{Butz2}
A.~R. Butz.
\newblock Alternative algorithm for hilbert's space-filling curve.
\newblock {\em IEEE Trans. Comput.}, 20(4):424--426, 1971.

\bibitem{Butz1}
Arthur~R. Butz.
\newblock Convergence with {H}ilbert's space filling curve.
\newblock {\em J. Comput. System Sci.}, 3:128--146, 1969.

\bibitem{Falconer}
Kenneth Falconer.
\newblock {\em Fractal geometry}.
\newblock John Wiley \& Sons Inc., Hoboken, NJ, second edition, 2003.
\newblock Mathematical foundations and applications.

\bibitem{Grav}
S.~E. Graversen.
\newblock ``{P}olar''-functions for {B}rownian motion.
\newblock {\em Z. Wahrsch. Verw. Gebiete}, 61(2):261--270, 1982.

\bibitem{LeGall}
Jean-Fran{\c{c}}ois Le~Gall.
\newblock Sur les fonctions polaires pour le mouvement brownien.
\newblock In {\em S\'eminaire de {P}robabilit\'es, {XXII}}, volume 1321 of {\em
  Lecture Notes in Math.}, pages 186--189. Springer, Berlin, 1988.

\bibitem{PM}
Y.~Peres and P.~M\"{o}rters.
\newblock {\em Brownian Motion}.
\newblock Cambridge Series in Statistical and Probabilistic Mathematics.
  Cambridge University Press, 2010.

\bibitem{PU}
F.~Przytycki and M.~Urba{\'n}ski.
\newblock On the {H}ausdorff dimension of some fractal sets.
\newblock {\em Studia Math.}, 93(2):155--186, 1989.

\bibitem{RY}
Daniel Revuz and Marc Yor.
\newblock {\em Continuous martingales and {B}rownian motion}, volume 293 of
  {\em Grundlehren der Mathematischen Wissenschaften [Fundamental Principles of
  Mathematical Sciences]}.
\newblock Springer-Verlag, Berlin, third edition, 1999.

\bibitem{Sagan}
Hans Sagan.
\newblock {\em Space-filling curves}.
\newblock Universitext. Springer-Verlag, New York, 1994.

\end{thebibliography}

\begin{thebibliography}{imm}

\bibitem{Butz} A.R.~Butz, Alternative Algorithm for Hilbert's Space-Filling Curve, {\it IEEE Transactions on Computers \bf{20},} 424--426
(1971)
\bibitem{Grav} S.E.~Graversen, Polar Functions for Brownian motion,
{\it Z. Wahrschverw - Gebiete \bf{61},} 261--270 (1982)

\end{document}